\DeclareMathOperator{\sech}{sech}
\theoremstyle{definition}
\newtheorem{definition}{Definition}[section]
\newtheorem{example}[definition]{Example}
\newtheorem{remark}[definition]{Remark}
\theoremstyle{plain}
\newtheorem{theorem}[definition]{Theorem}
\newtheorem{lemma}[definition]{Lemma}
\title{Convergence of Solutions of the BBM and BBM-KP model Equations
}
\author{Jacob B. Aguilar, Ph.D.}
\address{Department of Mathematics, Saint Leo University, Saint Leo, FL 33574, \\ Email address: jacob.aguilar@saintleo.edu }
\author{Michael M. Tom, Ph.D.}
\address{Department of Mathematics, 
Louisiana State University, Baton Rouge, LA 70803, \\ Email address: tom@math.lsu.edu}
\subjclass{35Q35, 35Q53, 35A23, 30L15, 35B45, 35G20, 32W25, 35S30}
\begin{document}
\maketitle

\begin{abstract}
The Benjamin-Bona-Mahony (BBM) equation has proven to be a good approximation for the unidirectional propagation of small amplitude long waves
in a channel where the crosswise variation can be safely ignored. The Benjamin-Bona-Mahony-Kadomtsev-Petviashvili (BBM-KP) equation is the regularized version of the  Kadomtsev-Petviashvili equation which arises in various modeling scenarios corresponding to nonlinear dispersive waves that propagate
principally along the $x$-axis with weak dispersive effects undergone in the direction parallel to the $y$-axis and normal to the primary direction of propagation. There is much literature on mathematical studies regarding these well known equations, however the relationship between the solutions of their underlying pure initial value problems is not fully understood. In this work, it is shown that the solution of the Cauchy problem for the BBM-KP equation converges
to the solution of the Cauchy problem for the BBM equation in a suitable function space, provided that
the initial data for both equations are close as the transverse variable $y \rightarrow \pm \infty$.
\end{abstract}

\section{Introduction and Background}\label{sec:int}
The pure initial value problem for the Benjamin-Bona-Mahony (BBM) equation
\begin{equation}
\begin{cases}
u_t +uu_x -u_{xxt}= 0
\hspace{10pt}\text{for}\hspace{10pt} (x,t) \in \mathbb{R} \times \mathbb{R}_+,\\ 
u(x,0)=u_0(x),\\
\end{cases}
    \label{BBM}
\end{equation}
has been studied by various authors. The BBM equation, appearing in (\ref{BBM}) was introduced in \cite{benjamin1972model} as the regularized counterpart of the well-known Korteweg-de-Vries (KdV) equation  \cite{bona1981evaluation,hammack1974korteweg,zabusky1971shallow}. This equation was originally proposed as a model for one-dimensional, unidirectional small-amplitude long-waves on inviscid fluids. In the setting of shallow-water waves, $u=u(x,t)$ represents the displacement of the water surface or velocity at time $t$ and location $x$. In addition to modeling long weakly dispersive surface waves in liquids, the BBM equation 
has been utilized as a model for hydromagnetic waves in cold plasma, acoustic waves in anharmonic crystals and acoustic-gravity waves in compressible fluids 
\citep[p.~612]{wazwaz2010partial}. 

One can obtain the BBM equation from the KdV equation by observing that under suitable conditions $u_x \approx -u_t$. This derivative approximation permits the  replacement of the third-order term $u_{xxx}$ by the mixed term  $-u_{xxt}$ and results in the following bounded dispersion relation
\begin{equation*}
\omega_1(\xi)=\frac{\xi}{1+\xi^2}. \qquad \text{(Dispersion relation of BBM equation)}
\label{Dispersion relation of BBM}
\end{equation*}
The dispersion relation relates the time evolution of a system to its spatial structure. This function uniquely characterizes the linear part of a system and encodes information regarding the propagation of its corresponding traveling wave solutions. Upon comparing the dispersion relation of the BBM equation to that of the KdV equation, it is immediate that dispersion relation of the BBM equation is preferable. Indeed, the function $\omega_1$ is bounded and does not exhibit bad limiting behavior as the unbounded dispersion relation of the KdV equation. Both equations admit solitary wave solutions \cite{zeng2003existence,sirendaoreji2004new,wang2006exact}. However, in a physical context, the unbounded dispersion relation of the KdV equation results in wave solutions which are allowed to  propagate with infinite speed. As pointed out in \cite{benjamin1972model}, this is one of many deficiencies of the KdV equation with respect to modeling scenarios. Consequently, the BBM equation serves as a more physically accurate model and has attracted much attention from researchers.

The derivation of the BBM equation hinges on the implementation of a Boussinesq scaling regime. In such  parameter regimes one specializes to the case of long waves which possess a small amplitude in comparison with the depth of the water. Subsequently, a sequence of approximate Hamiltonians can be constructed by expanding the Dirichlet-Neumann operator and retaining terms up to a specified order. The governing idea behind such small-amplitude long wave derivation schemes resides in approximating a Hamiltonian evolutionary system by fixing its phase space and Poison structure and replacing the Hamiltonian density function by an appropriate approximation (see \cite{benjamin1984impulse,craig1994hamiltonian} for a detailed review of the Hamiltonian perturbation theory utilized in such variational derivations). 

A restriction concerning the application of the BBM equation as a practical model for water waves is that the BBM equation is strictly one-dimensional, i.e. one spatial dimension plus time, whereas the surface of a water wave is two-dimensional. In an effort to remedy this, the following
pure initial value problem for the Benjamin-Bona-Mahony-Kadomtsev-Petviashvili (BBM-KP) equation was proposed 
\begin{equation}
\begin{cases}
(\eta_{t} + \eta_{x} + \eta \eta_{x} - \eta_{xxt})_x + \gamma \eta_{yy} = 0 \hspace{10pt}\text{for}\hspace{10pt} (x,y,t) \in \mathbb{R}^2 \times \mathbb{R}_+\hspace{10pt}\text{and}\hspace{10pt}\gamma = \pm1,\\  
\eta(x,y,0)=\eta_0(x,y).
\end{cases}
\label{BBM-KP}
\end{equation}

The BBM-KP equation, featured in (\ref{BBM-KP}), is the regularized version of the usual KP equation introduced in \cite{kadomtsev1970stability}. This model arises in various contexts where nonlinear dispersive waves propagate principally along the $x$-axis with weak dispersive effects undergone in the direction parallel to the $y$-axis and normal to the main direction of propagation. A diversity of exact travelling wave solutions of the BBM-KP equation and its generalizations have been formally derived, including: solitons, compactons, solitary patterns and periodic solutions  \cite{de1997solitary,wazwaz2005exact,wazwaz2008extended,song2010exact,alam2013exact,ouyang2014traveling}.

The linearized dispersion relation of the BBM-KP equation is 
\begin{equation*}
\omega_2(\xi,\mu) = \frac{\xi^2 + \gamma \mu^2}{\xi(1+\xi^2)}, \quad \gamma = \pm1. \qquad \text{(Dispersion relation of BBM-KP equation)}
\label{Dispersion relation of BBMKP}
\end{equation*}
The dispersion relation $\omega_2$ is a good approximation to the dispersion relation of the usual KP equation, but does not possess the unwanted limiting behavior as $\xi \rightarrow +\infty$ \cite{bona2002cauchy}.

To procure the BBM-KP equation from the BBM equation, the horizontal coordinates are oriented in a manner so that the $x$-direction is the principal direction of wave propagation. Additionally, it is assumed that wave amplitudes are relatively small, the water is shallow typical to horizontal wavelengths, and the waves are nearly one dimensional. The resulting equation is the BBM formulated in the KP sense, and is naturally referred to as the BBM-KP equation. This equation models small amplitude long waves in $(2+1)$ space, principally moving along the $x$-axis. Similar to the KP equations, (\ref{BBM-KP}) is called the BBM-KP I or BBM-KP II, depending if $\gamma$ is equal to negative or positive unity, respectively. In a physical context, the sign of the parameter $\gamma$ corresponds to whether the surface tension is neglected or not. As in the case of the BBM equation, the BBM-KP equation can be formally derived by utilizing the unifying framework covered by Craig and Groves \cite{craig1994hamiltonian}. Specifically, the usual KP equations are derived by separating out the right and left-going waves and
employing the preservation of the Hamiltonian structure  under changes of the dependent and independent variables. Afterwards, the BBM-KP equation is obtained through applying the derivative approximation used in the derivation of the BBM equation.

Despite the vast amount of literature regarding the pure initial value problems (\ref{BBM}) and (\ref{BBM-KP}), the qualitative relationship between their solutions is not fully understood. Our work is a step towards in this direction.  Particularly, we show that the solutions of the Cauchy problems associated to the BBM and BBM-KP model equations converge in the $L^2$-based Sobolev Class $H_x^k(\mathbb{R}) \subset L^2(\mathbb{R})$ for all $k \geq 1$, provided their corresponding initial data are close in $H_x^k(\mathbb{R})$ as the transverse variable $y \rightarrow \pm \infty$. Specifically, we have the following theorem.
\begin{theorem}
Let $\eta$ be the solution of the Cauchy problem (\ref{BBM-KP}) with initial data $\eta(x,y,0)=\psi(x,y)$, where $\psi \in H^s_{-1}(\mathbb{R}^2)$. Assume that $\phi^+$ and $\phi^-$ are two functions in $H_x^k(\mathbb{R})$ such that 
$$\lim_{y \rightarrow \pm \infty}||\psi(\cdot,y)-\phi^\pm(\cdot)||_{H^k_x(\mathbb{R})}=0.$$
If $u^+$ and $u^-$ are nontrivial solutions of the Cauchy problem (\ref{BBM}) corresponding to initial data $u^\pm(x,0)=\phi^\pm(x)$, i.e. $u^+$ and $u^- \neq 0$ a.e., then 
$$\lim_{y \rightarrow \pm \infty}||\eta -u^\pm||_{H^k_x(\mathbb{R})}=0,$$
for all $k\geq 1$ and $s \geq k+1$.
\label{thm:jacobtom}
\end{theorem}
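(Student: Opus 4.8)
The plan is to control the difference $w^\pm := \eta - u^\pm$, regarded for each fixed $y$ as a function of $(x,t)$, by a Gronwall-type energy argument and then to let $y\to\pm\infty$. First I would recast both Cauchy problems in their regularized (Duhamel) form by applying $(1-\partial_x^2)^{-1}$, which is the decisive structural feature of BBM-type models: since $(1-\partial_x^2)^{-1}$ gains two $x$-derivatives and $(1-\partial_x^2)^{-1}\partial_x$ maps $H^{k-1}_x$ into $H^k_x$, all estimates close with no loss of derivatives. Subtracting the equation for $u^\pm$ from that for $\eta$ then yields an evolution equation for $w^\pm$ of the schematic form
\begin{equation*}
\partial_t w^\pm = (1-\partial_x^2)^{-1}\partial_x\!\Big(L\,w^\pm + \tfrac12\, w^\pm(\eta+u^\pm)\Big) + F^\pm, \qquad F^\pm := -\gamma\,(1-\partial_x^2)^{-1}\partial_x^{-1}\eta_{yy},
\end{equation*}
where $L$ collects the linear first-order terms and $F^\pm$ is the forcing produced by the transverse dispersion present in (\ref{BBM-KP}) but absent from (\ref{BBM}); the quadratic part has been rewritten using $\eta\eta_x - u^\pm u^\pm_x = \tfrac12\partial_x\big(w^\pm(\eta+u^\pm)\big)$.

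Next I would run an $H^k_x$ energy estimate at each fixed $y$. Differentiating $\tfrac12\|w^\pm(\cdot,y,t)\|_{H^k_x}^2$ in $t$ and exploiting the regularized structure, the linear first-order (transport) contributions are skew-symmetric and drop out after integration by parts in $x$; the nonlinear term is controlled by the algebra property of $H^k_x$ for $k\ge 1$ (equivalently the embedding $H^1_x\hookrightarrow L^\infty_x$) — this is exactly where the hypothesis $k\ge 1$ enters — giving a bound of the form $C\big(\|\eta\|_{H^k_x}+\|u^\pm\|_{H^k_x}\big)\|w^\pm\|_{H^k_x}^2$, whose prefactor is uniformly bounded on $[0,T]$ by the a priori bounds furnished by the well-posedness theory for (\ref{BBM}) and (\ref{BBM-KP}). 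The forcing contributes at most $\|w^\pm\|_{H^k_x}\|F^\pm\|_{H^k_x}$, yielding the differential inequality $\tfrac{d}{dt}\|w^\pm\|_{H^k_x}\le C\|w^\pm\|_{H^k_x}+\|F^\pm\|_{H^k_x}$. Gronwall's inequality then gives, for $t\in[0,T]$,
\begin{equation*}
\|w^\pm(\cdot,y,t)\|_{H^k_x}\le e^{CT}\Big(\|w^\pm(\cdot,y,0)\|_{H^k_x}+\int_0^T\|F^\pm(\cdot,y,s)\|_{H^k_x}\,ds\Big).
\end{equation*}

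It then remains to send $y\to\pm\infty$. The initial term $\|w^\pm(\cdot,y,0)\|_{H^k_x}=\|\psi(\cdot,y)-\phi^\pm\|_{H^k_x}$ tends to $0$ by hypothesis, so everything reduces to showing that the forcing integral vanishes in the limit, and I expect this to be the \emph{main obstacle}. Intuitively $F^\pm\to 0$ because, as $y\to\pm\infty$, $\eta(\cdot,y,\cdot)$ settles down to a profile independent of $y$, forcing its transverse derivative $\eta_{yy}$ to decay; the difficulty is to make this quantitative in the $H^k_x$ norm, uniformly for $s\in[0,T]$, so that the limit can be passed through the time integral. Here the two remaining hypotheses do the work: membership of $\eta$ in $C([0,T];H^s_{-1}(\mathbb{R}^2))$ encodes precisely the integrability of the $\partial_x^{-1}\partial_y$-type quantities appearing in $F^\pm$, while the regularity gap $s\ge k+1$ supplies the extra derivative needed to measure $F^\pm=-\gamma(1-\partial_x^2)^{-1}\partial_x^{-1}\eta_{yy}$ in $H^k_x$. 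Concretely, I would use the uniform-in-$t$ a priori bound to show that $\int_{\mathbb{R}}\|F^\pm(\cdot,y,s)\|_{H^k_x}^2\,dy$ is finite and bounded independently of $s$, which forces $\|F^\pm(\cdot,y,s)\|_{H^k_x}\to0$ as $y\to\pm\infty$; an equicontinuity-in-$y$ argument upgrades convergence along sequences to a genuine limit, and dominated convergence pushes the limit inside $\int_0^T(\cdot)\,ds$. Combining the two vanishing contributions in the Gronwall estimate gives $\lim_{y\to\pm\infty}\|\eta-u^\pm\|_{H^k_x}=0$, as claimed.
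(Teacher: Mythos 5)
Your proposal is correct in substance and lands on the same underlying mechanism as the paper --- an energy/Gronwall estimate in $H^k_x$ at frozen $y$, with the transverse term $\partial_x^{-1}\eta_{yy}$ as a forcing that must vanish as $y\to\pm\infty$ and the initial difference vanishing by hypothesis --- but the route is genuinely different in three respects. First, the decomposition: you run two separate differences $w^\pm=\eta-u^\pm$, whereas the paper works with the single interpolant $w=\eta-\tfrac12[u^++u^-]-\tfrac12[u^+-u^-]\tanh y$ of (\ref{wfunction}), which treats both limits at once; the price the paper pays is an equation (\ref{wIVP}) containing cross terms such as $(1-\tanh^2 y)\,u^+u^-_x$, which require the auxiliary polynomial bound of Lemma \ref{lemma:polya}, with the weights $(1\pm\tanh y)$ and $(1-\tanh^2 y)$ doing the vanishing. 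Second, the Gronwall bookkeeping: you keep the inhomogeneity additive, $\|w^\pm\|\le e^{CT}\bigl(\|w^\pm(0)\|+\int_0^T\|F^\pm\|\,ds\bigr)$, while the paper writes its bound through the quotient $C^\star=(D_\eta+C_k)/(C_1^++C_1^-)$ of Lemma \ref{lemma:Cstarfinite} and is thereby forced to assume $u^\pm$ nontrivial to keep the denominator positive; your version shows that hypothesis is an artifact of that quotient, not of the problem. Third, you estimate the Duhamel form after applying $(1-\partial_x^2)^{-1}$, while the paper multiplies the unregularized equation by $\partial_x^kw$ and integrates by parts, using $-w_{xxt}$ to generate the $H^1$ energy; these are equivalent in how the derivative count closes, but your form retains the two-derivative smoothing inside the forcing $F^\pm=-\gamma(1-\partial_x^2)^{-1}\partial_x^{-1}\eta_{yy}$, which the paper forfeits (its table requires $\partial_x^{k-1}\eta_{yy}\in L^2_x$, whence $s\ge k+1$).

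One caveat on your final step. The assertion that finiteness of $\int_{\mathbb{R}}\|F^\pm(\cdot,y,s)\|_{H^k_x}^2\,dy$ ``forces'' $\|F^\pm(\cdot,y,s)\|_{H^k_x}\to0$ is false as stated --- an $L^2$ function of $y$ need not decay at infinity --- and your equicontinuity clause is exactly what must carry the weight. To ground it, bound one $y$-derivative as well: a symbol computation against the $H^s_{-1}$ weight shows $F^\pm(\cdot,\cdot,t)\in H^1_y(\mathbb{R};H^k_x)$ uniformly on $[0,T]$ for $s$ in the stated range (comfortably so once $k\ge2$; the endpoint $(k,s)=(1,2)$ is delicate and needs a slightly finer argument), and vector-valued $H^1_y$ functions do vanish as $y\to\pm\infty$. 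It is worth noting that the published proof is weaker than yours at precisely this point: Lemma \ref{lemma:Cstarfinite} asserts that $D_\eta\to0$ ``since $\partial_x^{-1}\eta_{yy}$ vanishes for $s\ge2$,'' which infers pointwise decay in $y$ from mere membership in $L^2(\mathbb{R}^2)$ with no continuity-in-$y$ argument. So the obstacle you flagged as the main one is real, and your sketch, once the equicontinuity is justified as above, is more careful than the paper's own treatment of it.
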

In the statement of the above theorem, the plus (minus) superscript over the initial data denotes whether the transverse variable $y$ of $\psi$ is approaching positive or negative infinity and corresponds to the plus (minus) superscript over the solution (see Definition \ref{def:data} in Section \ref{sec:proof}).

This manuscript provides a deeper qualitative understanding concerning the intricate relationship between the solutions of the BBM (\ref{BBM}) and BBM-KP (\ref{BBM-KP}) pure inital value problems and is organized as follows: Section \ref{sec:notate} features the mathematical notation and  framework which is utilized in the rigorous calculations to follow,  Section \ref{sec:exist} summarizes the existence theory of the pure initial value problems being studied, Section \ref{sec:relate}
is dedicated to the mathematical relationship between the BBM and BBM-KP model equations, and Section \ref{sec:proof} contains the preliminary lemmas and proof of the main results.
\section{Notation and Framework}\label{sec:notate}
In this section, an overview of the notation and mathematical framework required for obtaining our result is introduced. For an in-depth look into the function classes utilized in this work, and many other closely related topics in the fields of functional analysis and nonlinear partial differential equations, the reader is referred to \cite{tao2006nonlinear,grafakos2008classical,evans2010partial}. 

Firstly, let $\mathbb{R}_+:=\{x \in \mathbb{R}:x > 0\}$ and $\mathbb{R}_0:=\mathbb{R}_+ \cup \{0\}$ denote the sets of strictly positive and non-negative real numbers, respectively. Additionally, the ordered pair $(\xi, \mu)$ corresponds to the Fourier variables dual to $(x, y)$. All integrals will be with respect to Lebesgue measure $\lambda$ for any complex-valued measurable function $\varphi$. Use is made of the Banach space $L^\infty(\mathbb{R})$, i.e. the space of all real valued measurable functions  which are essentially bounded, characterized by the following norm
\begin{equation*}
	| \varphi|_\infty \equiv \inf \left\lbrace C \geq 0 : \lambda(\{ x: |\varphi |> C \})=0 \right\rbrace. 
\end{equation*}
A smooth function belongs to the Schwartz class $S(\mathbb{R}^n)$ provided that all of its derivatives are rapidly decreasing. It is well know that $S(\mathbb{R}^n)$ is a Frechet Space and, as a result, has a dual denoted $S'(\mathbb{R}^n)$, i.e. the space of tempered distributions.
If $X$ and $Y$ are both Banach spaces, we denote a continuous embedding between them by $\hookrightarrow$, i.e. $X \hookrightarrow Y$ means that $||u||_Y \lesssim ||u||_X$. When attempting to obtain a contraction mapping for the intergral formulation of a given Cauchy problem, one needs to first localize in time. Consider the following function space $C\left([a,b];X\right)$ equipped with the norm
$$||u||_{C^{a}_{b}X} := \sup_{t \in [a,b]} ||u||_X.$$ It directly follows that if $X$ is a Banach space, then so is  $C\left([a,b];X\right)$ \cite{fabec2014non}. Next, we proceed to define the necessary function spaces and their associated norms. Let $H^s(\mathbb{R}^2)$ denote the classical Sobolev space equipped with the norm
$$||\eta||_{H^s(\mathbb{R}^2)} = \Big(\int_{\mathbb{R}^2}(1 + \mu^2 + \xi^2)^s|\hat{\eta}(\xi,\mu)|^2 d\xi d \mu \Big)^\frac{1}{2}.$$
Analogously, $H^k_x(\mathbb{R})$ will denote the Sobolev space in just the spatial variable $x$, endowed with the following norm
$$||f||_{H^k_x(\mathbb{R})} = \Big(\int_{\mathbb{R}}(1 + \xi^2)^k|\hat{f}(\xi)|^2 d \xi \Big)^\frac{1}{2}.$$
Extensive use is made of the space 
$H^s_{-1}(\mathbb{R}^2)=\{\eta \in S'(\mathbb{R}^2):||\eta||_{H^s_{-1}(\mathbb{R}^2)} < \infty\}$, 
supplied with the norm
$$||\eta||_{H^s_{-1}(\mathbb{R}^2)} = \Big(\int_{\mathbb{R}^2}(1+|\xi|^{-1})^2(1 + \xi^2 + \mu^2)^s|\hat{\eta}(\xi,\mu)|^2 d\xi d \mu \Big)^\frac{1}{2}.$$

The pseudo-differential operator $\partial^{-1}_x f$ is defined via the Fourier transform as 
$$\widehat{\partial^{-1}_x f} := \frac{1}{i\xi} \hat{f}(\xi,y).$$
Due to the singularity of the symbol $\xi^{-1}$ at $\xi =0$, one requires that $\hat{f}(0,y)=0$
(the Fourier transform in the variable $x$), which is clearly equivalent to
$\int_{\mathbb{R}}f(x,y) dx=0.$
In what follows, $\partial^{-1}_x f\in L^2(\mathbb{R}^2)$ means there exists an $L^2(\mathbb{R}^2)$ function $g$ such 
that $g_x=f$, at least in the distributional sense. When we write $\partial^{-k}_x \partial^{m}_y$ for $\left( k,m\right) \in \mathbb{Z}^+ \times \mathbb{Z}^+$, we implicitly assume that the operator is well-defined. As covered in \cite{molinet2007remarks}, this imposes a constraint on the solution $u$. This implies that $u$ is an $x$ derivative of a suitable parent function. Such a condition can be accomplished provided that $u\in S'(\mathbb{R}^2)$ is such that $\xi^{-k} \mu^{m} \hat{u}(\xi,\mu, t)\in S'(\mathbb{R}^2)$, or if $u(x,y,t)= \frac{\partial}{\partial_x}v(x,y,t)$ for $v \in C^1_x(\mathbb{R})$, i.e. the space of continuous functions possessing a continuous derivative with respect to $x$.

It is worth pointing out that the second possibility mentioned above imposes a decay condition on the function $u$. Precisely, for fixed $y$ and $t \in \mathbb{R}_+$ it is required that $u \rightarrow 0$ as $x \rightarrow \pm \infty$. For all $(t,y) \in \mathbb{R}_+ \times \mathbb{R}$, this setup results in
$\int_{\mathbb{R}}u(x,y,t) dx=0.$ Formally, we localize in time and define the class of functions 
$\mathcal{X}^s(\mathbb{R}^2):=\{u : u \in H^s(\mathbb{R}^2) \cap H^{s-1}(\mathbb{R}^2)\}$, 
where $v(x,y,t)= {\partial^{-1}_x}u(x,y,t)$ and $w(x,y,t) = \int^{x'}_{-\infty}u(x',y,t) dx'$. Thus,
$\hat{u}(t)= \hat{w}(t)$ in  $S'(\mathbb{R}^2)$ for all $t \in \left[-T,T\right]$
and as a result $v=w$ due to that fact that the Fourier transform is an isomorphism on $S'(\mathbb{R}^2)$. Since $u \in C\left([-T,T]; \mathcal{X}^s(\mathbb{R}^2)\right)$ and $s>2$, it is a direct consequence that $u \in L^1(\mathbb{R}^2)$ \cite{mammeri2009unique}. Moreover, $\partial^{-1}_x u \in C\left([-T,T]; H^s(\mathbb{R}^2)\right)$ and the fact that $S(\mathbb{R}^n)$ is dense in $H^s(\mathbb{R}^2)$ implies that $v \rightarrow 0$ as $x \rightarrow \pm \infty$. Therefore, it transpires that 
\begin{displaymath}
\int_{\mathbb{R}}u(x',y,t) dx' = \lim_{x \rightarrow \infty}\int^{x'}_{-\infty}u(x',y,t) dx':=\lim_{x \rightarrow \infty} w(x,y,t)=\lim_{x \rightarrow \infty} v(x,y,t)= \lim_{x \rightarrow \infty} {\partial^{-1}_x}u(x,y,t)=0.
\end{displaymath}
As a result, an additional requirement is imposed on the initial data to a given Cauchy problem that contains the operator $\partial^{-k}_x \partial^{m}_y$. In response, one is often forced to turn to the more regular, weighted anisotropic Sobolev spaces in order to obtain a contraction mapping to the Duhamel formulation of the corresponding initial value problem.

Furthermore, we adopt the following convention in order to simplify various calculations and estimates involved in the proofs. 
\begin{definition}\label{def:bound}
Let $A,B \in \mathbb{R}$, we denote $A \vee B := \max\{A,B\}$ and $A \wedge B := \min\{A,B\}$. Furthermore, the notation $A \lesssim B$ (respectively $A \gtrsim B$) means that there exists an absolute positive constant $C$ such that $A \leq C B$  (respectively $A \geq C B$).
\end{definition}
\section{Existence theory}\label{sec:exist}
This section contains a summarization of the existence theory for the Cauchy problems (\ref{BBM}) and (\ref{BBM-KP}). The question of global well-posedness of the BBM pure initial value problem (\ref{BBM}) was recently answered in \cite{bona2009sharp}. Bona and Tzvetkov showed that the Cauchy problem (\ref{BBM}) is globally well-posed in the $L^2$-based Sobolev class $H^s(\mathbb{R})$, for $s \geq 0$. More precisely, they proved the following result \cite{bona2009sharp}. 
\begin{theorem}  Fix $s \geq 0$. For any $u_0 \in H^s(\mathbb{R})$, there exists a $T=T(||u_0||_{H^s}) > 0$ and a unique solution
$u \in C([0,T]; H^s(\mathbb{R}))$ of the initial value problem (\ref{BBM}). 
Moreover, for $R>0$, let $B_R$ connote a ball of radius $R$ centered at the origin in $H^s(\mathbb{R})$ and let $T=T(R) >0$ denote a uniform existence time for the initial value problem (\ref{BBM}) with $u_0 \in B_R$. Then the correspondence $u_0 \mapsto u$ which associates to $u_0$ the solution $u$ of the initial value problem (\ref{BBM}) with initial data $u_0$ is a real analytic mapping of $B_R$ to $C([T,-T]; H^s(\mathbb{R}))$.
\label{BBM GWP}
\end{theorem}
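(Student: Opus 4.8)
The plan is to recast (\ref{BBM}) as a fixed-point problem and solve it by the contraction mapping principle, then upgrade to analytic dependence via the analytic implicit function theorem. Writing $\Lambda := 1-\partial_x^2$, the equation $u_t + uu_x - u_{xxt}=0$ is equivalent to $\Lambda u_t = -\partial_x(u^2/2)$, and since $\Lambda^{-1}$ acts on the Fourier side as multiplication by $(1+\xi^2)^{-1}$, we may invert it to obtain the nonlocal-in-space but derivative-free evolution $u_t = -\Lambda^{-1}\partial_x(u^2/2)$. Integrating in $t$ yields the Duhamel-type integral equation
\begin{equation*}
u(t) = \Phi(u)(t) := u_0 - \int_0^t \Lambda^{-1}\partial_x\Big(\tfrac12\, u(\tau)^2\Big)\,d\tau .
\end{equation*}
The decisive structural feature is that the multiplier $i\xi/(1+\xi^2)$ of $\Lambda^{-1}\partial_x$ is bounded and in fact decays like $|\xi|^{-1}$, so that $\Lambda^{-1}\partial_x$ is smoothing of order one; consequently no derivatives are lost and there is no semigroup to track.

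First I would establish the central bilinear estimate
\begin{equation*}
\big\|\Lambda^{-1}\partial_x(fg)\big\|_{H^k_x(\mathbb{R})} \lesssim \|f\|_{H^k_x(\mathbb{R})}\,\|g\|_{H^k_x(\mathbb{R})}, \qquad k\ge 0 .
\end{equation*}
Using $|\xi|/(1+\xi^2)\lesssim (1+\xi^2)^{-1/2}$, this reduces the order-one smoothing to the product estimate $\|fg\|_{H^{k-1}_x}\lesssim \|f\|_{H^k_x}\|g\|_{H^k_x}$. For $k>1/2$ this is immediate because $H^k_x(\mathbb{R})$ is a Banach algebra; the delicate range is $0\le k\le 1/2$, where I would argue by duality, invoking at the endpoint $k=0$ the embedding $H^1_x(\mathbb{R})\hookrightarrow L^\infty(\mathbb{R})$ to bound $\|fg\|_{H^{-1}_x}=\sup_{\|\phi\|_{H^1_x}\le1}|\int fg\phi|\lesssim \|f\|_{L^2}\|g\|_{L^2}$, and a fractional Leibniz (Kato--Ponce) estimate, or an explicit Littlewood--Paley frequency decomposition, for the intermediate exponents.

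With the bilinear estimate in hand, I would run the contraction on the closed ball $B=\{u\in C([-T,T];H^k_x):\|u\|_{C^{-T}_{T}H^k_x}\le 2\|u_0\|_{H^k_x}\}$. The estimate gives $\|\Phi(u)-\Phi(v)\|_{C^{-T}_{T}H^k_x}\le C\,T\,(\|u\|+\|v\|)\,\|u-v\|_{C^{-T}_{T}H^k_x}$, so choosing $T=T(\|u_0\|_{H^k_x})$ small enough that $C\,T\,\|u_0\|_{H^k_x}<1/4$ makes $\Phi$ a contraction of $B$ into itself. The Banach fixed point theorem then yields a unique $u\in C([-T,T];H^k_x)$ solving the integral equation, and hence (\ref{BBM}); uniqueness in the full space follows by applying the same bilinear bound to the difference of two solutions together with Gronwall's inequality. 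Since $T$ depends only on $\|u_0\|_{H^k_x}$, every $u_0\in B_R$ shares the common existence time $T=T(R)$.

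Finally, for the real-analytic dependence on the data I would invoke the analytic implicit function theorem. Define $F:H^k_x(\mathbb{R})\times C([-T,T];H^k_x)\to C([-T,T];H^k_x)$ by $F(u_0,u)=u-\Phi(u)$; being affine in $u_0$ and quadratic in $u$, it is real-analytic between these Banach spaces. At a solution the partial Fr\'echet derivative is $D_uF = I + L$, where $Lv(t)=\int_0^t \Lambda^{-1}\partial_x(u(\tau)v(\tau))\,d\tau$ satisfies $\|L\|\le C\,T\,\|u\|_{C^{-T}_{T}H^k_x}<1$ by the same bilinear bound; hence $D_uF$ is boundedly invertible via a Neumann series. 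The analytic implicit function theorem then gives that $u_0\mapsto u$ is a real-analytic map of $B_R$ into $C([-T,T];H^k_x)$. The principal obstacle throughout is the low-regularity bilinear estimate on $0\le k\le 1/2$, where the loss of the algebra property of $H^k_x$ must be compensated precisely by the single order of smoothing built into $\Lambda^{-1}\partial_x$.
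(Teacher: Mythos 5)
This theorem is not proved in the paper at all --- it is quoted verbatim from Bona--Tzvetkov \cite{bona2009sharp} --- and your proposal correctly reconstructs essentially their original argument: inverting $1-\partial_x^2$ to obtain a derivative-free Duhamel map whose multiplier $i\xi/(1+\xi^2)$ smooths by one order, the bilinear bound $\big\|\Lambda^{-1}\partial_x(fg)\big\|_{H^s}\lesssim\|f\|_{H^s}\|g\|_{H^s}$ valid down to $s=0$ (with the endpoint handled exactly by the duality/$H^1\hookrightarrow L^\infty$ device you describe and the intermediate range by interpolation), a contraction on a ball with $T=T(\|u_0\|_{H^s})$ uniform over $B_R$, and real-analytic dependence on the data from the quadratic, hence analytic, structure of the fixed-point map. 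Your argument is correct as proposed.
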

The above theorem improved earlier known results proven by Benjamin et al. \cite{benjamin1972model}, where the initial value problem (\ref{BBM}) was shown to be globally well-posed for data in $H^k$, where $k\in \mathbb{Z}$ such that $k \geq 1$. It should also be noted that the authors in \cite{bona2009sharp} proved that  the initial value problem (\ref{BBM}) is ill-posed for data in $H^s(\mathbb{R})$ such that $s<0$. Indeed, the flow map $u_0 \mapsto u(t)$ is not even $C^2$. Precisely, the ensuing result was proven in \cite{bona2009sharp}. 
\begin{theorem}  For any $T>0$ and $s<0$, the flow-map $u_0 \mapsto u(t)$ associated to the Cauchy problem (\ref{BBM}) is not of class $C^2$ from $H^s$ to $C([0,T]; H^s(\mathbb{R}))$.
\end{theorem}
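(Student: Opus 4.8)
The plan is to argue by contradiction through the second-order term of the Picard/Duhamel expansion of the flow, exploiting the fact that for the BBM equation the linear propagator is the identity. Writing \eqref{BBM} as $(1-\partial_x^2)u_t = -\tfrac12\partial_x(u^2)$ and inverting the (everywhere invertible) operator $1-\partial_x^2$ gives the Duhamel identity $u(t) = u_0 - \tfrac12\int_0^t (1-\partial_x^2)^{-1}\partial_x\big(u(\tau)^2\big)\,d\tau$, since the linear evolution is trivial. The obstruction to $C^2$ regularity will come entirely from the quadratic term in this expansion, whose Fourier symbol $\tfrac{\xi}{1+\xi^2}$ couples high frequencies to order-one frequencies while the $H^s$ norm with $s<0$ gives almost no weight to high frequencies.

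First I would record the quadratic coefficient of the flow on smooth data, where everything is classical. Fix a Schwartz function $\phi$ and let $u^\delta$ denote the solution with data $\delta\phi$. By the real-analyticity of the solution map on $H^{s_0}(\mathbb{R})$ for $s_0$ large (Theorem \ref{BBM GWP}), $u^\delta$ admits a convergent expansion $u^\delta = \delta\phi + \delta^2 u_2 + O(\delta^3)$ in $C([0,T];H^{s_0})$, and matching powers of $\delta$ in the Duhamel identity yields $u_2(t) = -\tfrac t2 (1-\partial_x^2)^{-1}\partial_x(\phi^2)$. Because $H^{s_0}\hookrightarrow H^s$, the same series is the $H^s$-valued Taylor expansion of $\delta\mapsto u^\delta$; hence if the flow map $\Phi_t$ were $C^2$ from $H^s$ into $C([0,T];H^s)$, uniqueness of Taylor coefficients would force $D^2\Phi_t(0)[\phi,\phi]=2u_2(t)$ and, by boundedness of the second derivative as a bilinear form, $\sup_{t\in[0,T]}\|u_2(t)\|_{H^s}\le C\|\phi\|_{H^s}^2$ with $C$ independent of $\phi$.

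The crux is then to defeat this quadratic estimate with a family of concentrated data. I would take $\widehat{\phi_N}(\xi)=\rho(\xi-N)+\rho(\xi+N)$, where $\rho$ is a fixed smooth nonnegative bump supported in $[-\tfrac12,\tfrac12]$, so that $\phi_N$ is real, Schwartz, and $\|\phi_N\|_{H^s}^2\asymp N^{2s}$. The product $\phi_N^2$ has a low-frequency packet near $\xi=0$ of size $O(1)$ arising from the interaction of the $+N$ and $-N$ bumps: in Fourier, $\widehat{\phi_N^2}\asymp \rho*\rho$ on $[-1,1]$. Evaluating at $t=T$ and keeping only $|\xi|\le 1$, where the weighted symbol $\tfrac{\xi^2}{(1+\xi^2)^{2-s}}\asymp\xi^2$ is of order one, gives $\|u_2(T)\|_{H^s}^2\gtrsim T^2\int_{|\xi|\le1}\xi^2|\rho*\rho|^2\,d\xi\gtrsim T^2$. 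Therefore $\|u_2(T)\|_{H^s}/\|\phi_N\|_{H^s}^2\gtrsim T\,N^{-2s}=T\,N^{2|s|}\to\infty$ as $N\to\infty$, contradicting the uniform bound. Note that the single zero of $\tfrac{\xi}{1+\xi^2}$ at the origin is harmless, since it is integrated across a frequency band of width one.

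The step I expect to be the genuine obstacle is the rigorous identification of $2u_2$ with the second Fréchet derivative in the $H^s$ topology for $s<0$: because $H^s$ is not an algebra, the nonlinearity $u^2$ and the coefficient $u_2$ must be produced on smooth data (where the Duhamel expansion is classical and, by Theorem \ref{BBM GWP} at high regularity, analytic) and only afterwards measured in $H^s$. One must check carefully that the high-regularity expansion descends to a bona fide $H^s$-valued Taylor expansion, so that the $C^2$ hypothesis genuinely constrains $u_2$ through a single fixed constant; density of Schwartz functions in $H^s$ then lets the contradiction be drawn against all of $D^2\Phi_t(0)$. Once this bookkeeping is in place, the frequency computation above is routine.
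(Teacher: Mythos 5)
Your proposal is correct, but note that the paper itself contains no proof of this statement: it is quoted verbatim from Bona and Tzvetkov \cite{bona2009sharp}, so the honest comparison is with the argument there. Your argument is in fact a faithful reconstruction of that proof --- $C^2$ regularity of the flow at the origin forces a uniform $H^s$ bound on the second Picard iterate, which is then defeated by data with Fourier mass in unit bumps at $\pm N$, whose high--high interaction produces an $O(1)$ low-frequency output while $\|\phi_N\|_{H^s}^2\asymp N^{2s}\to 0$ for $s<0$. The one structural difference is an artifact of the paper: equation \eqref{BBM} as written omits the linear transport term $u_x$, so (as you observe) the linear propagator is the identity and the second iterate collapses to the explicit expression $u_2(t)=-\tfrac{t}{2}(1-\partial_x^2)^{-1}\partial_x(\phi^2)$. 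In \cite{bona2009sharp} the equation is $u_t+u_x+uu_x-u_{xxt}=0$, the free group $S(t)=e^{-t\partial_x(1-\partial_x^2)^{-1}}$ is nontrivial, and the second iterate carries an oscillatory time integral; the point of their computation is that because the dispersion relation $\omega_1(\xi)=\xi/(1+\xi^2)$ is bounded (and $\omega_1(N)+\omega_1(\xi-N)\approx 0$ for the $\pm N$ interaction at low output frequency $\xi$), this integral contributes a factor $\asymp t$ with no cancellation, after which the frequency count is exactly yours, yielding $\|u_2(t)\|_{H^s}/\|\phi_N\|_{H^s}^2\gtrsim t\,N^{-2s}\to\infty$. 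So your simplification is legitimate for the equation as stated in the paper and extends to the full BBM with only this bookkeeping change. Two minor points: take $\rho$ even so that $\phi_N$ is genuinely real-valued, and the step you flag as the ``genuine obstacle'' --- descending the high-regularity analytic expansion to an $H^s$-valued Taylor expansion along the ray $\delta\mapsto\delta\phi$ --- is indeed needed but standard (it is the same reduction used in \cite{bona2009sharp}, going back to Bourgain's and Tzvetkov's ill-posedness arguments), and your sketch of it via Theorem \ref{BBM GWP}, global existence for small smooth data, and uniqueness of Taylor coefficients is adequate.
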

Regarding the well-posedness of the BBM-KP pure initial value problem,
Bona et al. \cite{bona2002cauchy} showed that the Cauchy problem (\ref{BBM-KP}) can be solved by Picard Iteration yielding to local and global well-posedness results. In particular, the flow map was shown to be smooth and the well-posedness results were established for a class of equations involving pure power nonlinearities and general dispersion in $x$. Specifically, the following theorem was proven in \cite{bona2002cauchy}. 

\begin{theorem}
 Let $\eta_0 \in H^s_{-1}(\mathbb{R}^2)$ with $s > \frac{3}{2}$. Then there exist a $T_0$ such that the initial-value problem 
(\ref{BBM-KP}) has a unique solution $\eta \in C([0,T]; H^s_{-1}(\mathbb{R}^2))$, $\partial^{-1}_x\eta_y \in C([0,T]; H^{s-1}_{-1}(\mathbb{R}^2))$,
with $\eta_t \in C([0,T]; H^{s-2}(\mathbb{R}^2))$. Moreover, the map $\psi \rightarrow \eta$ is continuous from  $H^s_{-1}(\mathbb{R}^2)$
to $C([0,T_0]; H^s_{-1}(\mathbb{R}^2)).$
\label{BBM-KP GWP}
\end{theorem}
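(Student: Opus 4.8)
The plan is to prove local well-posedness by the contraction mapping principle applied to the Duhamel formulation of \eqref{BBM-KP}, i.e. a Picard iteration scheme. First I would apply $\partial_x^{-1}$ to \eqref{BBM-KP} to put it in evolution form
$$\eta_t = -(1-\partial_x^2)^{-1}\Big(\eta_x + \tfrac{1}{2}(\eta^2)_x + \gamma\,\partial_x^{-1}\eta_{yy}\Big),$$
and observe that its linear part is the Fourier multiplier with symbol $-i\omega_2(\xi,\mu)$, where $\omega_2$ is the dispersion relation recorded in Section \ref{sec:int}. Writing $S(t)$ for the associated propagator --- the Fourier multiplier with symbol $e^{-it\omega_2(\xi,\mu)}$ --- the solution with datum $\eta(0)=\psi$ is a fixed point of
$$\Phi(\eta)(t) := S(t)\psi - \tfrac{1}{2}\int_0^t S(t-\tau)\,(1-\partial_x^2)^{-1}\partial_x\big(\eta(\tau)^2\big)\,d\tau.$$
Since $|e^{-it\omega_2(\xi,\mu)}|=1$ pointwise, $S(t)$ is a unitary group on $H^s_{-1}(\mathbb{R}^2)$ and on each weighted space appearing in the statement, so the linear evolution is free of cost in the relevant norms.

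Second, I would show that $\Phi$ maps a closed ball of $C([0,T_0];H^s_{-1}(\mathbb{R}^2))$ into itself and contracts there, with $T_0$ depending only on $\|\psi\|_{H^s_{-1}}$. Two estimates drive this. The first is a smoothing bound: $(1-\partial_x^2)^{-1}\partial_x$ maps $H^s(\mathbb{R}^2)$ boundedly into $H^s_{-1}(\mathbb{R}^2)$, because the composite low-frequency weight $(1+|\xi|^{-1})\,|\xi|\,(1+\xi^2)^{-1}=(1+|\xi|)(1+\xi^2)^{-1}$ is uniformly bounded; thus the factor $\xi(1+\xi^2)^{-1}$ supplied by $(1-\partial_x^2)^{-1}\partial_x$ exactly cancels the $|\xi|^{-1}$ weight at the origin. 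The second is the algebra property of $H^s(\mathbb{R}^2)$ above the Sobolev exponent, giving $\|\eta^2\|_{H^s}\lesssim\|\eta\|_{H^s}^2\leq\|\eta\|_{H^s_{-1}}^2$. Together with the isometry of $S(t)$ these yield
$$\|\Phi(\eta)\|_{C([0,T_0];H^s_{-1})}\lesssim\|\psi\|_{H^s_{-1}}+T_0\,\|\eta\|_{C([0,T_0];H^s_{-1})}^2,$$
and an identical bilinear estimate for $\Phi(\eta_1)-\Phi(\eta_2)$ produces a Lipschitz constant proportional to $T_0$ times the ball radius. Taking the radius comparable to $\|\psi\|_{H^s_{-1}}$ and $T_0$ small accordingly closes the fixed point and delivers the unique local solution $\eta\in C([0,T_0];H^s_{-1}(\mathbb{R}^2))$.

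Third, I would read off the auxiliary regularity and the continuity of the data-to-solution map. From the evolution form above, each term on the right is controlled in $H^{s-2}(\mathbb{R}^2)$ by $\|\eta\|_{H^s_{-1}}$ --- the worst term being $(1-\partial_x^2)^{-1}\partial_x^{-1}\eta_{yy}$, whose symbol $\mu^2\xi^{-1}(1+\xi^2)^{-1}$ accounts for the loss of exactly two derivatives --- so $\eta_t\in C([0,T_0];H^{s-2}(\mathbb{R}^2))$. Applying $\partial_x^{-1}\partial_y$ to $\Phi(\eta)$ and invoking the same multiplier bounds places $\partial_x^{-1}\eta_y$ in $C([0,T_0];H^{s-1}_{-1}(\mathbb{R}^2))$. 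Continuity --- indeed local Lipschitz continuity, and in fact real analyticity, since the nonlinearity is a fixed quadratic polynomial --- of $\psi\mapsto\eta$ follows by applying the contraction estimate to two data and running a Gronwall argument on the difference of the corresponding Duhamel iterates.

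The principal obstacle is the bookkeeping of the $\xi^{-1}$ singularity throughout. One must confirm at each iterate that the constraint underlying $\partial_x^{-1}$ is preserved --- equivalently that the zero-$x$-mean condition $\hat\eta(0,\mu,t)=0$ propagates, which is automatic because the nonlinearity carries a prefactor $\partial_x$ --- and then establish the weighted bilinear bound $\|(1-\partial_x^2)^{-1}\partial_x(\eta_1\eta_2)\|_{H^s_{-1}}\lesssim\|\eta_1\|_{H^s_{-1}}\|\eta_2\|_{H^s_{-1}}$ uniformly down to $\xi=0$. The delicate region is the low-frequency corner, where the gain $\xi(1+\xi^2)^{-1}$ must be weighed against $(1+|\xi|^{-1})$ rather than discarded, and where the transverse anti-derivative quantity $\partial_x^{-1}\eta_y$ is most sensitive; once this cancellation is made rigorous the remainder is the standard quadratic fixed-point argument.
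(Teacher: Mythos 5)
Be aware first of the context: the paper never proves this theorem. It is imported verbatim from Bona, Liu and Tom \cite{bona2002cauchy}, with only the remark that the proof there proceeds by Picard iteration. Your first two paragraphs reproduce exactly that strategy, and they are sound: the evolution form with linear multiplier $-i\omega_2(\xi,\mu)$, the unitarity of $S(t)$ on the $L^2$-based weighted spaces, the key low-frequency cancellation $(1+|\xi|^{-1})\,|\xi|\,(1+\xi^2)^{-1}=(1+|\xi|)(1+\xi^2)^{-1}\leq C$, the algebra property of $H^s(\mathbb{R}^2)$ for $s>1$ (implied by $s>\frac{3}{2}$) together with $H^s_{-1}(\mathbb{R}^2)\hookrightarrow H^s(\mathbb{R}^2)$, and the resulting quadratic fixed-point bound do deliver the unique solution in $C([0,T_0];H^s_{-1}(\mathbb{R}^2))$ with (even analytic) dependence on the data. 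Your derivation of $\eta_t\in C([0,T_0];H^{s-2}(\mathbb{R}^2))$ from the symbol $\mu^2\xi^{-1}(1+\xi^2)^{-1}$ is likewise correct.

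The genuine gap is the auxiliary conclusion $\partial^{-1}_x\eta_y\in C([0,T];H^{s-1}_{-1}(\mathbb{R}^2))$, which you dispatch by ``invoking the same multiplier bounds.'' They do not suffice. Applying $\partial_x^{-1}\partial_y$ to the Duhamel term produces the symbol $\mu(1+\xi^2)^{-1}$ acting on $\widehat{\eta^2}$, and measured against the $H^{s-1}_{-1}$ weight the composite factor behaves like $|\mu|\,|\xi|^{-1}$ as $\xi\to0$: the gain $\xi(1+\xi^2)^{-1}$ that rescued your main estimate has been entirely spent undoing $\partial_x^{-1}$. Since $\eta^2\in L^1(\mathbb{R}^2)$ makes $\widehat{\eta^2}$ continuous with $\widehat{\eta^2}(0,\mu)\neq0$ in general (because $\int_{\mathbb{R}}\eta^2\,dx>0$), there is no vanishing at $\xi=0$ to compensate, and the weighted integral diverges like $\int_{|\xi|<1}\xi^{-2}\,d\xi$. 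The same obstruction is already visible at $t=0$: $\psi\in H^s_{-1}(\mathbb{R}^2)$ does not imply $\partial_x^{-1}\psi_y\in H^{s-1}_{-1}(\mathbb{R}^2)$, since the ratio of the two integrand weights is $\mu^2\xi^{-2}(1+\xi^2+\mu^2)^{-1}$, unbounded as $\xi\to0$ with $\mu$ fixed, and a data profile concentrated near $\xi=0$, $\mu\sim1$ defeats the claim. Hence this component of the theorem cannot be read off from $\eta\in C([0,T];H^s_{-1})$ by pointwise symbol estimates plus unitarity; it requires the finer low-frequency analysis actually carried out in \cite{bona2002cauchy}, e.g.\ running the contraction in a norm that carries $\partial_x^{-1}\eta_y$ along with $\eta$ and proving the bilinear estimate uniformly in the region $|\xi|\ll|\mu|$. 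You correctly flagged the low-frequency corner as the delicate point, but flagging it is not the same as closing it.
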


When applied to the usual BBM-KP Cauchy problem (\ref{BBM-KP}), the results established in \cite{bona2002cauchy} imply global well-posedness, regardless of the sign of $\gamma$, in the following space

$$W_1(\mathbb{R}^2)=\{\eta_0 \in L^2(\mathbb{R}^2):  ||\eta_0||_{L^2} + ||\partial_x \eta_0||_{L^2}  + ||\partial_{xx} \eta_0||_{L^2}+ ||\partial^{-1}_x \partial_y \eta_0||_{L^2}+ ||\partial_y \eta_0||_{L^2} < \infty \}.$$

Afterwards, Saut and Tzvetkov \cite{saut2004global} improved these global well-posedness results to the following energy space
$Y=\{\eta_0 \in L^2(\mathbb{R}^2): \partial_x\eta_0 \in L^2(\mathbb{R}^2)\}.$
\section{Relationship Between the Model Equations}\label{sec:relate}
The primary aim of this section is to highlight the mathematical relationship between the BBM and BBM-KP model equations. Section \ref{sec:int} sheds light on the fact that the derivation of the BBM-KP equation hinges on similar physical assumptions as those utilized for the BBM equation, with the additional condition that the wave motion simultaneously experiences weak variation along the transverse direction. 

In order to formalize the matter at hand, we recast an argument used by Molinet et al. \cite{molinet2007remarks} into the theme of BBM type equations. To this end we start with a one dimensional long-wave dispersive equation of the BBM type, i.e.
\begin{equation}
u_{t} + \alpha u_{x} + u u_{x} - L u_{t}= 0, \quad u = u(x,t) \quad \text{for} \quad (\alpha,x,t) \in \mathbb{R}^2 \times \mathbb{R}_0.\\
\label{4.1}
\end{equation}
The operator $L$ appearing in equation (\ref{4.1}) above is a Fourier multiplier defined as
\begin{equation*}
\widehat{L \varphi(\xi)} = m(\xi) \hat{\varphi(\xi)}.
\end{equation*}
For a real function $m$, the circumflex denotes the function’s Fourier transform and the symbol $m$ associated to the operator $L$ is assumed to be homogeneous. When $\alpha=0$, the case  $m(\xi) = {\xi}^2$ corresponds to the operator $L=  {\partial}^2_x$ and produces the BBM equation, appearing in (\ref{BBM}). In the setting of water wave modeling scenarios, the multiplier $m(\xi)$ stands for the phase velocity and its sign depends on the surface tension parameter, as previously discussed for the BBM-KP equation in Section \ref{sec:int}. 

As mentioned in \cite{kadomtsev1970stability}, the correction to equation (\ref{4.1}) due to weak transverse effects is independent of the dispersion in $x$ and is solely related to the finite propagation speed properties of the linear transport operator $M = \partial_t + \partial_x$. We recall that $M$ gives rise to right moving unidirectional waves with unit speed, i.e. an initial wave profile $u_0(x)$ evolves under the flow of $M$ as $u_0(x-t)$. Accordingly, we define a weak transverse perturbation of $u_0(x)$ to be a two dimensional function $\eta_0(x,y)$ close to $u_0(x)$ when localized in the frequency region $\Big| \frac{\mu}{\xi} \Big|\ll 1$.

 Let $m(\partial_x,\partial_y)$ be the Fourier multiplier with real symbol $m(\xi, \mu)$. The governing idea is to seek a perturbation $\tilde{M} = \partial_t + \partial_x + m(\partial_x,\partial_y)$ of $M$ such that the wave profile of $\eta_0(x,y)$ undergoes negligible variation when evolving under the flow of $\tilde{M}$. As pointed out in \cite{molinet2007remarks}, a natural generalization of the flow of $M$ to $\mathbb{R}^2$ is the flow of the wave operator $W = \partial_t + \sqrt{-\Delta}$, which also exhibits the finite propagation speed property. Provided that $\Big| \frac{\mu}{\xi} \Big|\ll 1$, the approximation $\xi + \frac{1}{2} \xi^{-1} \mu^2 \approx \pm \sqrt{\xi^2 + \mu^2}$ holds and it is a direct consequence that
$$ \partial_t + \partial_x + \frac{1}{2}{\partial}^{-1}_x{\partial}^2_y \sim \partial_t + \sqrt{-\Delta}.$$
The above operator approximation leads to the correction $m(\partial_x,\partial_y)= \frac{1}{2}{\partial}^{-1}_x{\partial}^2_y$. Now, if we weight the multiplier $m$ by a constant, e.g. consider $\beta m(\partial_x,\partial_y)$ with real symbol $m(\xi, \mu)$ and constant $\beta \in \mathbb{R}-\{0\}$, then we obtain the correction $m(\partial_x,\partial_y)= \frac{1}{2\beta}{\partial}^{-1}_x{\partial}^2_y$. Therefore, we arrive at the following two dimensional model
\begin{equation*}
u_{t} + \alpha u_{x} + u u_{x} - L u_{t} + \frac{1}{2\beta}{\partial}^{-1}_x{\partial}^2_yu= 0. \\
\label{3.2}
\end{equation*}
Taking $\alpha=1$, $\beta=(2\gamma)^{-1}$ and
$L=  {\partial}^2_x$, we obtain the BBM-KP equation, featured in (\ref{BBM-KP}). This mathematical argument is in agreement with the fact that the BBM-KP equation models weakly dispersive long waves which essentially propagate in one direction with weak transverse effects. Indeed, the brief covering of the model formulation in Section \ref{sec:int} employed the assumption that the average wave length in the $x$ direction is much larger than the average wave length in the $y$ direction. In this vein, it is natural to view the BBM-KP equation as a weak transverse perturbation of the BBM equation.

This discussion strongly suggests an intricate mathematical relationship between the solutions of the Cauchy problems associated to the BBM and BBM-KP model equations. The results established in Theorem \ref{thm:jacobtom} further contribute to the current knowledge concerning this relationship and are proven in Section \ref{sec:proof} below.
  

\section{
Proof of the main results
}\label{sec:proof}
This section contains the proof of our main result, namely Theorem \ref{thm:jacobtom}. We present the preliminary definitions and proceed to prove the Theorem. The required lemmas necessary for obtaining our result are proved in passage. 
\begin{definition}\label{def:data}
Assume that $\psi \in H^s_{-1}(\mathbb{R}^2)$ with $s > \frac{3}{2}$. We let $u^+$ and $u^-$ denote the solutions to the pure initial-value problem (\ref{BBM}), emanating from Theorem \ref{BBM GWP}, corresponding to initial data $\phi^+$ 
and $\phi^-$ defined by the equations below:
\begin{equation} 
	  \phi^+(\cdot):=\lim_{y \to +\infty} \psi(\cdot,y) \quad \text{and} \quad  
      	\phi^-(\cdot) :=\lim_{y \to -\infty} \psi(\cdot,y).   
	\label{intdatadef}
    \end{equation}
\end{definition}
\begin{example}\label{examp} Concerning an example of such initial data described in Definition \ref{def:data}, one could let $\psi(x,y) = \phi(x) + \sech y$ for $\phi \in H^s(\mathbb{R})$.
\end{example}

In an effort to simplify some of the calculations involved in the proof of Theorem \ref{thm:jacobtom} below, we employ a slight abuse of notation. Particularly we introduce the notation $C_1^\pm$, where the superscript corresponds to whether the transverse variable $y$ is approaching positive or negative infinity. A similar naming convention was deployed in the form of a superscript on the functions $u^\pm$ and their corresponding initial data $\phi^\pm$.  

\subsection{Proof of Theorem \ref{thm:jacobtom}} \label{sec:ProofHor}
Now that we have established all of the necessary ingredients, we advance to prove the main result.
\begin{proof}
Suppose $u^+$ and $u^-$ are solutions to the Cauchy problem (\ref{BBM}), emerging from Theorem \ref{BBM GWP}, corresponding to initial data
$\phi^+$ and $\phi^-$ as described in Definition \ref{def:data} above. Moreover, let $\eta$ be the solution to the Cauchy problem (\ref{BBM-KP}) arising from Theorem \ref{BBM-KP GWP} with initial data $\psi$. Define the following function 
\begin{equation}
w(x,y,t) = \eta - \frac{1}{2} [u^{+} + u^{-}] - \frac{1}{2}[u^{+} - u^{-}] \tanh y.
\label{wfunction}    
\end{equation}
Observe that 
\begin{equation*}
w(x,y,0) = \psi(x,y) - \frac{1}{2} [\phi^{+}(x) + \phi^{-}(x)] - \frac{1}{2}[\phi^{+}(x) - \phi^{-}(x)] \tanh y
\label{wfunction0}    
\end{equation*}
and hence
$$\lim_{y \to \pm \infty} w(x,y,0)=0.$$ 
A straightforward calculation shows that $w$ given by Equation \ref{wfunction}  above solves the following initial value problem
\begin{equation}
 \left\{
\begin{array}{l}
\displaystyle w_t + w_x -w_{xxt} - \partial^{-1}_x \eta_{yy} + w w_x + \frac{1}{2}(1 + \tanh y) (u^+ w)_x + \frac{1}{2}(1- \tanh y)(u^- w)_x \\
\displaystyle - \frac{1}{4} ( 1 - \tanh^2 y) ( u^+ u^+_x + u^- u^-_x - u^+u^-_x - u^-u^+_x) = 0, \\
\displaystyle w(x,y,0)= \psi(x,y) - \frac{1}{2}[ \phi^+ +\phi^-] - \frac{1}{2}[\phi^+ -\phi^-] \tanh y. \\
    \end{array} \right.
    \label{wIVP}
\end{equation}
We now venture into the task of estimating $||w||_{H^1_x(\mathbb{R})}$ for any $y \in \mathbb{R}$. To this end, we multiply Equation \ref{wIVP} by $w$ and integrate over $\mathbb{R}$, in the spatial variable $x$, 
to obtain the following integral
$$\int_{\mathbb{R}} [w w_t + w w_x - w w_{xxt} -  w \partial^{-1}_x \eta_{yy} + w^2 w_x + w \frac{1}{2}(1 + \tanh y)  (u^+ w)_x +w \frac{1}{2}(1- \tanh y)(u^- w)_x $$
$$-w \frac{1}{4} ( 1 - \tanh^2 y) ( u^+ u^+_x + u^- u^-_x - u^+u^-_x - u^-u^+_x)]\,dx =0.$$
After a few integration by parts, we arrive at the following estimate
$$\frac{1}{2} \frac{d}{dt}  \Big[\int_{\mathbb{R}}w^2 dx + \int_{\mathbb{R}}w^2_x dx\Big] \leq \Big|\int_{\mathbb{R}} w \partial^{-1}_x \eta_{yy} dx \Big | + \frac{1}{2}\Big | \int_{\mathbb{R}} (1 + \tanh y) u^+w_xw dx \Big |+ $$    
$$ \frac{1}{2}\Big |\int_{\mathbb{R}}(1 - \tanh y) u^- w_xw  dx \Big |+ \frac{1}{4}\Big| \int_{\mathbb{R}} ( 1 - \tanh^2 y)w  u^+ u^+_x\,dx\Big|+ \frac{1}{4}\Big| \int_{\mathbb{R}} ( 1 - \tanh^2 y)w  u^- u^-_x dx \Big|$$        
$$ +\frac{1}{4} \Big | \int_{\mathbb{R}}  ( 1 - \tanh^2 y) w  u^+ u^-_x dx \Big | + \frac{1}{4} \Big |\int_{\mathbb{R}} ( 1 - \tanh^2 y)w  u^- u^+_x dx \Big |.$$
Making use of H$\ddot{o}$lders inequality, it follows that 
$$\frac{1}{2} \frac{d}{dt} || w ||^2_{H^1_x(\mathbb{R})} \leq ||w||_{L^2(\mathbb{R})}  ||\partial^{-1}_x \eta_{yy}||_{L^2(\mathbb{R})} + \frac{(1 + \tanh y)}{2}| u^+|_\infty||w||_{L^2(\mathbb{R})}||w_x||_{L^2(\mathbb{R})}$$  
$$ + \frac{(1 - \tanh y)}{2} | u^-|_\infty||w||_{L^2(\mathbb{R})}||w_x||_{L^2(\mathbb{R})}+ \frac{( 1 - \tanh^2 y)}{4}  \text{ } | u^+|_\infty||w||_{L^2(\mathbb{R})}||u^+_x||_{L^2(\mathbb{R})}$$        
$$ + \frac{( 1 - \tanh^2 y)}{4}  \text{ }  | u^-|_\infty||w||_{L^2(\mathbb{R})}||u^-_x||_{L^2(\mathbb{R})}+ \frac{( 1 - \tanh^2 y)}{4}  \text{ }  | u^+|_\infty||w||_{L^2(\mathbb{R})}||u^-_x||_{L^2(\mathbb{R})}$$   
$$ + \frac{( 1 - \tanh^2 y)}{4}  \text{ }  | u^-|_\infty||w||_{L^2(\mathbb{R})}||u^+_x||_{L^2(\mathbb{R})}.$$
An application of Young's inequality yields
$$\frac{1}{2} \frac{d}{dt} || w ||^2_{H^1_x(\mathbb{R})} \leq ||w||_{H^1_x(\mathbb{R})}  ||\partial^{-1}_x \eta_{yy}||_{L^2(\mathbb{R})} +  \frac{(1 + \tanh y)}{2}| u^+|_\infty||w||^2_{H^1_x(\mathbb{R})} $$  
$$ +  \frac{(1 - \tanh y)}{2}| u^-|_\infty||w||^2_{H^1_x(\mathbb{R})} + \frac{( 1 - \tanh^2 y)}{4}  \text{ } | u^+|_\infty||w||_{H^1_x(\mathbb{R})}||u^+_x||_{L^2(\mathbb{R})}$$       
$$ + \frac{( 1 - \tanh^2 y)}{4} \text{ }  | u^-|_\infty||w||_{H^1_x(\mathbb{R})}||u^-_x||_{L^2(\mathbb{R})}+ \frac{( 1 - \tanh^2 y)}{4} \text{ }  | u^+|_\infty||w||_{H^1_x(\mathbb{R})}||u^-_x||_{L^2(\mathbb{R})}$$   
$$ + \frac{( 1 - \tanh^2 y)}{4} \text{ }  | u^-|_\infty||w||_{H^1_x(\mathbb{R})}||u^+_x||_{L^2(\mathbb{R})}.$$
The previous inequality can be written in the form
$$\frac{1}{2} \frac{d}{dt} || w ||^2_{H^1_x(\mathbb{R})} \leq \Big[ ||\partial^{-1}_x \eta_{yy}||_{L^2(\mathbb{R})}+ ( 1 - \tanh^2 y) P\Big(||u^+||_{L^2(\mathbb{R})}, ||u^-||_{L^2(\mathbb{R})}, |u^+|_\infty, | u^-|_\infty \Big) \Big]|| w ||_{H^1_x(\mathbb{R})}$$
$$+\frac{1}{2}\Big[ (1 + \tanh y)||u^+||_{H^1(\mathbb{R})}  + (1 - \tanh y)||u^-||_{H^1(\mathbb{R})}  \Big]|| w ||^2_{H^1_x(\mathbb{R})}.$$
Owing to Lemma \ref{lemma:polya} below, we arrive at the following estimate
$$\frac{1}{2} \frac{d}{dt} || w ||^2_{H^1_x(\mathbb{R})} \leq \Big[ ||\partial^{-1}_x \eta_{yy}||_{L^2(\mathbb{R})}+ ( 1 - \tanh^2 y) Q\Big(||u^+||_{H^1(\mathbb{R})}, ||u^-||_{H^1(\mathbb{R})} \Big) \Big]|| w ||_{H^1_x(\mathbb{R})}$$
$$+\frac{1}{2}\Big[ (1 + \tanh y)||u^+||_{H^1(\mathbb{R})}  + (1 - \tanh y)||u^-||_{H^1(\mathbb{R})}  \Big]|| w ||^2_{H^1_x(\mathbb{R})}$$
or
$$ \frac{1}{2} \frac{d}{dt} || w ||^2_{H^1_x(\mathbb{R})} \leq (D_\eta + C_k)|| w ||_{H^1_x(\mathbb{R})} + (C_1^+ + C_1^-)||w||^2_{H^1_x(\mathbb{R})},$$
where the terms $D_\eta$, $C_k$, $C_1^+$ and $C_1^-$ are defined in Lemma \ref{lemma:Cstarfinite} below.
From this the following inequality is derived
$$ \frac{d}{dt} || w ||_{H^1_x(\mathbb{R})} \leq (D_\eta + C_k) + (C_1^+ + C_1^-)||w||_{H^1_x(\mathbb{R})}.$$
By a variant of Gronwall's lemma, we have
\begin{equation*}
|| w ||_{H^1_x(\mathbb{R})}\leq || w(x,y,0) ||_{H^1_x(\mathbb{R})} e^{(C_1^+ + C_1^-)t} +  \frac{D_\eta +C_k}{C_1^+ + C_1^-}\Big(e^{(C_1^+ + C_1^-)t} -1\Big)
\end{equation*}
or
\begin{equation*}
|| w ||_{H^1_x(\mathbb{R})}\leq || w(x,y,0) ||_{H^1_x(\mathbb{R})} e^{(C_1^+ + C_1^-)t} +  C^\star \Big(e^{(C_1^+ + C_1^-)t} -1\Big).
\end{equation*}

In order to complete the proof we observe the following Lemma.
\begin{lemma}\label{lemma:Cstarfinite}
Let $u^+$ and $u^-$ be nontrivial solutions of the Cauchy problem (\ref{BBM}), i.e. $u^+$ and $u^- \neq 0$ a.e., and $\eta$ be the solution of the Cauchy problem (\ref{BBM-KP}), emanating from Theorems \ref{BBM GWP} and \ref{BBM-KP GWP}, respectively. Consider the following terms:
\begin{align*} 
	\begin{cases}  
	  	D_\eta :=||\partial^{-1}_x \eta_{yy}||_{L^2(\mathbb{R})} ,\\  
      	C_k :=(1 - \tanh^2 y)Q\Big(||\partial^k_xu^+||_{H^1(\mathbb{R})}, ||\partial^k_xu^-||_{H^1(\mathbb{R})} \Big),\\      
      	C_1^+ :=\frac{1}{2}(1 +\tanh y)||u^+||_{H^1(\mathbb{R})},\\ 
      	C_1^- :=\frac{1}{2}(1 -\tanh y)||u^-||_{H^1(\mathbb{R})},
	\end{cases} 
    \end{align*} 
 where $Q\Big(||\partial^k_xu^+||_{H^1(\mathbb{R})}, ||\partial^k_xu^-||_{H^1(\mathbb{R})} \Big)$ is defined in Lemma \ref{lemma:polya} below. Define the quotient
	\begin{equation*}
	   C^{\star}:=\frac{D_\eta +C_k}{C_1^+ + C_1^-}.
	\end{equation*}
Then, it follows that
	\begin{equation*}
	    \lim _{y\rightarrow \pm \infty} C^{\star}=0.
	\end{equation*}
\begin{proof}
Let $y\rightarrow \pm \infty$, the assumption that $u^+$ and $u^- \neq 0$ a.e. implies that $||u^+||_{L^2(\mathbb{R})} \wedge ||u^-||_{L^2(\mathbb{R})} \neq 0$, consequently, $||u^+||_{H^k(\mathbb{R})} \wedge ||u^-||_{H^k(\mathbb{R})} \neq 0$ for all $k \geq 0$ and the denominator of $C^{\star}$ is always positive. Concerning the limiting behavior of the numerator, the term $D_\eta =||\partial^{-1}_x \eta_{yy}||_{L^2(\mathbb{R})} \rightarrow 0$ since 
$\partial^{-1}_x \eta_{yy}$ vanishes for $s\geq 2$.
Furthermore, the term 
\begin{equation*}
    C_k =(1 - \tanh^2 y)Q\Big(||\partial^k_xu^+||_{H^1(\mathbb{R})}, ||\partial^k_xu^-||_{H^1(\mathbb{R})} \Big) \rightarrow 0,
\end{equation*}
due to the fact that $(1 - \tanh^2 y)$ vanishes and $Q\Big(||\partial^k_xu^+||_{H^1(\mathbb{R})}, ||\partial^k_xu^-||_{H^1(\mathbb{R})} \Big) < \infty$ due to Theorem \ref{BBM GWP}.
Therefore, the numerator of $C^{\star}$ vanishes, but the denominator does not and the lemma is proven. 
\end{proof}
\end{lemma}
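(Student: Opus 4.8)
The plan is to treat the numerator and denominator of $C^\star$ separately: I would show that $D_\eta + C_k \to 0$ while $C_1^+ + C_1^-$ remains bounded below by a positive constant as $|y| \to \infty$, so that the quotient necessarily vanishes. The only genuine input required beyond bookkeeping is the decay in the transverse variable of the two-dimensional quantity $\partial^{-1}_x \eta_{yy}$.

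First I would pin down the denominator. Since $\tanh y \to \pm 1$ as $y \to \pm\infty$, one has $C_1^+ + C_1^- \to ||u^+||_{H^1(\mathbb{R})}$ as $y \to +\infty$ and $C_1^+ + C_1^- \to ||u^-||_{H^1(\mathbb{R})}$ as $y \to -\infty$. The nontriviality hypothesis $u^\pm \neq 0$ a.e. gives $||u^+||_{L^2(\mathbb{R})} \wedge ||u^-||_{L^2(\mathbb{R})} > 0$, and as the $H^1$ norm dominates the $L^2$ norm this forces $||u^+||_{H^1(\mathbb{R})} \wedge ||u^-||_{H^1(\mathbb{R})} > 0$. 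Hence there exist $y_0 > 0$ and $c > 0$ with $C_1^+ + C_1^- \geq c$ for all $|y| \geq y_0$, so the denominator is uniformly bounded away from zero near both ends.

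For the numerator, the term $C_k$ is immediate: its prefactor $1 - \tanh^2 y = \sech^2 y$ decays (exponentially) to $0$, while $Q(||\partial^k_x u^+||_{H^1(\mathbb{R})}, ||\partial^k_x u^-||_{H^1(\mathbb{R})})$ is finite and bounded on $[0,T]$ by the global well-posedness in Theorem \ref{BBM GWP}, so $C_k \to 0$. The substantive term is $D_\eta = ||\partial^{-1}_x \eta_{yy}(\cdot,y,t)||_{L^2_x(\mathbb{R})}$. Using the $H^s_{-1}(\mathbb{R}^2)$ regularity of $\eta$ from Theorem \ref{BBM-KP GWP}, the weight $(1 + |\xi|^{-1})$ built into that norm yields $\partial^{-1}_x \eta \in C([0,T]; H^s(\mathbb{R}^2))$, and the elementary Fourier estimate $\mu^2 \le 1 + \xi^2 + \mu^2$ then gives $\partial^{-1}_x \eta_{yy} \in H^{s-2}(\mathbb{R}^2)$. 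The hard part, and the main obstacle to a fully rigorous argument, is to upgrade this global membership to pointwise-in-$y$ decay of the slice $y \mapsto ||\partial^{-1}_x \eta_{yy}(\cdot,y,t)||_{L^2_x}$: I would regard $G(y) := \partial^{-1}_x \eta_{yy}(\cdot,y,t)$ as an $L^2_x$-valued function of $y$, observe that $G$ lies in $H^1(\mathbb{R}_y; L^2_x)$ once $s$ is large enough for both $\partial^{-1}_x \eta_{yy}$ and $\partial^{-1}_x \eta_{yyy}$ to be square-integrable on $\mathbb{R}^2$, and then invoke the one-dimensional embedding $H^1(\mathbb{R}) \hookrightarrow C_0(\mathbb{R})$ along the transverse axis to conclude $||G(y)||_{L^2_x} \to 0$, i.e. $D_\eta \to 0$. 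Matching this derivative count against the assumed threshold $s \geq k+1$ is precisely the delicate step. Granting it, the numerator $D_\eta + C_k \to 0$ against a denominator bounded below, and therefore $\lim_{y \to \pm\infty} C^\star = 0$, as claimed.
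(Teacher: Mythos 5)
Your proposal is correct in outline and follows the same decomposition as the paper's proof: send the numerator $D_\eta + C_k$ to zero while keeping the denominator $C_1^+ + C_1^-$ away from zero, with the $C_k$ term handled identically in both (the prefactor $1-\tanh^2 y = \sech^2 y$ vanishes against a $Q$ that is finite by Theorem \ref{BBM GWP}). Where you genuinely diverge from the paper is on the two points it glosses over, and in both cases your treatment is the more substantial one. For the denominator, the paper only observes that $C_1^+ + C_1^-$ is positive for each $y$; you compute the actual limits $C_1^+ + C_1^- \to \|u^\pm\|_{H^1(\mathbb{R})} > 0$ as $y \to \pm\infty$ and extract a uniform lower bound $c>0$ for $|y| \geq y_0$, which is what the quotient argument actually requires (pointwise positivity alone would not preclude the denominator from decaying to zero). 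More significantly, for $D_\eta$ the paper's entire justification is the assertion that ``$\partial^{-1}_x \eta_{yy}$ vanishes for $s \geq 2$''; this is a genuine gap in the paper, since membership of $\partial^{-1}_x \eta_{yy}$ in $L^2(\mathbb{R}^2)$ by itself does not force the slice norms $y \mapsto \|\partial^{-1}_x \eta_{yy}(\cdot,y,t)\|_{L^2_x}$ (defined only a.e.) to tend to zero at infinity. Your mechanism --- viewing $G(y) = \partial^{-1}_x \eta_{yy}(\cdot,y,t)$ as an $L^2_x$-valued function of $y$, placing it in $H^1(\mathbb{R}_y; L^2_x)$, and invoking the one-dimensional embedding $H^1(\mathbb{R}) \hookrightarrow C_0(\mathbb{R})$ applied to $h(y) = \|G(y)\|_{L^2_x}$ --- is an actual proof of the decay that the paper merely asserts. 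The price, which you flag honestly, is one extra transverse derivative: you need $\partial^{-1}_x \eta_{yyy} \in L^2(\mathbb{R}^2)$, hence $s \geq 3$ here (and $s \geq k+2$ for the term $\partial^{k-1}_x \eta_{yy}$ arising at the general level of the induction), whereas the paper claims $s \geq 2$ (resp.\ $s \geq k+1$) suffices. This mismatch is not a defect of your argument but a symptom of the paper's: some quantitative control in $y$ beyond square-integrability is unavoidable for the conclusion, so the paper's stated threshold is, at minimum, not justified by its own proof, and your derivative count identifies exactly what must be added to close it.
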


On account of Lemma \ref{lemma:Cstarfinite}, it follows that
$$\lim _{y\rightarrow \pm \infty} || w ||_{H^1_x(\mathbb{R})}\leq 0 \cdot e^{C_1^\pm t} + 0\cdot\Big(e^{C_1^\pm t} -1\Big)=0.$$
Therefore, we have established that
$$\lim _{y\rightarrow \pm \infty} || w ||_{H^1_x(\mathbb{R})} =\lim_{y \rightarrow \pm \infty} || \eta(x,y,t) - u^{\pm} (x,t)||_{H^1_x(\mathbb{R})}=0.$$
More generally, for $k \geq 1$, we apply the operator $\partial^k_x$ to both sides of the differential equation \ref{wIVP}, multiply the result by $\partial^k_x w$, and integrate over $\mathbb{R}$ in the spatial variable $x$. After a few integration by parts we arrive at the following integral equation
$$\frac{1}{2}\frac{d}{dt} \Big[ \int_{\mathbb{R}}   (\partial^k_xw)^2 \,dx + \int_{\mathbb{R}}  (\partial^{k+1}_xw)^2 \,dx \Big] = \int_{\mathbb{R}} \partial^k_xw \partial^{k-1}_x \eta_{yy}\,dx$$
$$+\frac{(-1)^{k}}{2}(1+ \tanh y) \int_{\mathbb{R}} \Big(\partial^{2k+1}_xw \Big)(u^+w) \,dx + \frac{(-1)^{k}}{2}(1-\tanh y) \int_{\mathbb{R}} \Big(\partial^{2k+1}_xw \Big)(u^-w) \,dx$$
$$+ \frac{(-1)^k}{4}( 1 - \tanh^2 y)\int_{\mathbb{R}} \Big(\partial^{2k}_x w \Big)(u^+ u^+_x)\,dx+ \frac{(-1)^k}{4}( 1 - \tanh^2 y)\int_{\mathbb{R}} \Big(\partial^{2k}_x w \Big)(u^- u^-_x)\,dx$$          
$$ +\frac{(-1)^{k+1}}{4}( 1 - \tanh^2 y)\int_{\mathbb{R}} \Big(\partial^{2k}_x w \Big)(u^+ u^-_x)\,dx+\frac{(-1)^{k+1}}{4}( 1 - \tanh^2 y)\int_{\mathbb{R}} \Big(\partial^{2k}_x w \Big)(u^- u^+_x)\,dx.$$ 
Similarly, the above equation delivers the bound
$$\frac{1}{2} \frac{d}{dt} \Big[|| \partial^k_xw||^2_{L^2(\mathbb{R})} + || \partial^{k+1}_xw||^2_{L^2(\mathbb{R})} \Big ] \leq\int_{\mathbb{R}} |\partial^k_xw \partial^{k-1}_x \eta_{yy} | \,dx  $$
$$+  \frac{( 1 + \tanh y)}{2} |u^+|_\infty  \int_{\mathbb{R}} \Big |\partial^k_x w \partial^{k+1}_xw \Big|\,dx +  \frac{( 1 - \tanh y)}{2}| u^-|_\infty\int_{\mathbb{R}} \Big |\partial^k_x w \partial^{k+1}_xw \Big |\,dx$$
$$ +  \frac{( 1 - \tanh^2 y)}{4} | u^+|_\infty\int_{\mathbb{R}} \Big |\partial^{k+1}_xw    \partial^k_x u^+ \Big |\,dx  +  \frac{( 1 - \tanh^2 y)}{4} | u^-|_\infty \int_{\mathbb{R}} \Big|\partial^{k+1}_xw    \partial^k_x u^- \Big |\,dx$$ 
$$+  \frac{( 1 - \tanh^2 y)}{4}| u^+|_\infty\int_{\mathbb{R}}\Big | \partial^{k+1}_xw   \partial^k_x u^-\Big | \,dx  +   \frac{( 1 - \tanh^2 y)}{4}| u^-|_\infty \int_{\mathbb{R}} \Big|\partial^{k+1}_xw    \partial^k_x u^+\Big| \,dx .$$
An appeal to  H$\ddot{o}$lders inequality results in
$$\frac{1}{2} \frac{d}{dt} \Big[|| \partial^k_xw||^2_{L^2(\mathbb{R})} + || \partial^{k+1}_xw||^2_{L^2(\mathbb{R})} \Big ] \leq ||\partial^k_xw||_{L^2(\mathbb{R})}  ||\partial^{k-1}_x \eta_{yy}||_{L^2(\mathbb{R})}+$$
$$\frac{( 1 + \tanh y)}{2} | u^+|_\infty||\partial^k_xw||_{L^2(\mathbb{R})}||\partial^{k+1}_xw||_{L^2(\mathbb{R})} +  \frac{( 1 - \tanh y)}{2}| u^-|_\infty||\partial^k_xw||_{L^2(\mathbb{R})}||\partial^{k+1}_xw||_{L^2(\mathbb{R})}$$
$$+ \frac{( 1 - \tanh^2 y)}{4} \text{ } | u^+|_\infty||\partial^{k+1}_xw||_{L^2(\mathbb{R})}||\partial^k_xu^+||_{L^2(\mathbb{R})}$$
$$+ \frac{( 1 - \tanh^2 y)}{4}\text{ }  | u^-|_\infty||\partial^{k+1}_xw||_{L^2(\mathbb{R})}||\partial^k_xu^-||_{L^2(\mathbb{R})}$$
$$+ \frac{( 1 - \tanh^2 y)}{4}\text{ }  | u^+|_\infty||\partial^{k+1}_xw||_{L^2(\mathbb{R})}||\partial^k_xu^-||_{L^2(\mathbb{R})}$$
$$ + \frac{( 1 - \tanh^2 y)}{4}\text{ }  | u^-|_\infty||\partial^{k+1}_xw||_{L^2(\mathbb{R})}||\partial^k_xu^+||_{L^2(\mathbb{R})}.$$
After invoking Young's inequality, we have
$$ \frac{1}{2} \frac{d}{dt} \Big[|| \partial^k_xw||^2_{L^2(\mathbb{R})} + || \partial^{k+1}_xw||^2_{L^2(\mathbb{R})} \Big ] \leq \Big[|| \partial^k_xw||^2_{L^2(\mathbb{R})} + || \partial^{k+1}_xw||^2_{L^2(\mathbb{R})} \Big ]^\frac{1}{2}  ||\partial^{k-1}_x \eta_{yy}||_{L^2(\mathbb{R})}$$
$$ +  \frac{( 1 + \tanh y)}{2}| u^+|_\infty\Big[|| \partial^k_xw||^2_{L^2(\mathbb{R})} + || \partial^{k+1}_xw||^2_{L^2(\mathbb{R})} \Big ]$$
$$  + \frac{( 1 - \tanh y)}{2}| u^-|_\infty\Big[|| \partial^k_xw||^2_{L^2(\mathbb{R})} + || \partial^{k+1}_xw||^2_{L^2(\mathbb{R})} \Big ]$$     
$$ + \frac{( 1 - \tanh^2 y)}{4} \text{ } | u^+|_\infty\Big[|| \partial^k_xw||^2_{L^2(\mathbb{R})} + || \partial^{k+1}_xw||^2_{L^2(\mathbb{R})} \Big ]^\frac{1}{2}||\partial^k_xu^+||_{L^2(\mathbb{R})}$$       
$$ + \frac{( 1 - \tanh^2 y)}{4}\text{ }  | u^-|_\infty\Big[|| \partial^k_xw||^2_{L^2(\mathbb{R})} + || \partial^{k+1}_xw||^2_{L^2(\mathbb{R})} \Big ]^\frac{1}{2}||\partial^k_xu^-||_{L^2(\mathbb{R})}$$
$$+ \frac{( 1 - \tanh^2 y)}{4}\text{ }  | u^+|_\infty\Big[|| \partial^k_xw||^2_{L^2(\mathbb{R})} + || \partial^{k+1}_xw||^2_{L^2(\mathbb{R})} \Big ]^\frac{1}{2}||\partial^k_xu^-||_{L^2(\mathbb{R})}$$   
$$ + \frac{( 1 - \tanh^2 y)}{4}\text{ }  | u^-|_\infty\Big[|| \partial^k_xw||^2_{L^2(\mathbb{R})} + || \partial^{k+1}_xw||^2_{L^2(\mathbb{R})} \Big ]^\frac{1}{2}||\partial^k_xu^+||_{L^2(\mathbb{R})}.$$

To obtain the required bounds we invoke the following lemma. 
\begin{lemma}\label{lemma:polya}
Suppose that $k \geq 0$ and define the following polynomials:
\begin{align*} 
	  	P\Big(||\partial^k_xu^+||_{L^2(\mathbb{R})}, ||\partial^k_xu^-||_{L^2(\mathbb{R})}, |u^+|_\infty, | u^-|_\infty \Big) &:=\frac{1}{4}\Big(| u^+|_\infty ||\partial^k_xu^+||_{L^2(\mathbb{R})}+| u^-|_\infty ||\partial^k_xu^-||_{L^2(\mathbb{R})} \\  &+ | u^+|_\infty ||\partial^k_xu^-||_{L^2(\mathbb{R})}+| u^-|_\infty ||\partial^k_xu^+||_{L^2(\mathbb{R})} \Big),\\
      Q\Big(||\partial^k_xu^+||_{H^1(\mathbb{R})}, ||\partial^k_xu^-||_{H^1(\mathbb{R})} \Big) &:=\frac{1}{4}\Big(||\partial^k_xu^+||^2_{H^1(\mathbb{R})}+||\partial^k_xu^-||^2_{H^1(\mathbb{R})} \\
      &+ ||\partial^k_xu^+||_{H^1(\mathbb{R})} ||\partial^k_xu^-||_{H^1(\mathbb{R})} \\
      &+||\partial^k_xu^-||_{H^1(\mathbb{R})} ||\partial^k_xu^+||_{H^1(\mathbb{R})} \Big),\\
    \end{align*} 
where the operator $\partial^0_x$ is regarded as the identity operator. Then, the following estimate holds
\begin{equation}
P\Big(||\partial^k_xu^+||_{L^2(\mathbb{R})}, ||\partial^k_xu^-||_{L^2(\mathbb{R})}, |u^+|_\infty, | u^-|_\infty \Big) \leq Q\Big(||\partial^k_xu^+||_{H^1(\mathbb{R})}, ||\partial^k_xu^-||_{H^1(\mathbb{R})} \Big).    
\label{polyestimate}
\end{equation}
\end{lemma}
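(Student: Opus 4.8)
The plan is to recognize the right-hand polynomial $Q$ as a perfect square and then reduce the claimed inequality $(\ref{polyestimate})$ to two elementary norm comparisons. Writing $a := \|\partial^k_x u^+\|_{H^1(\mathbb{R})}$ and $b := \|\partial^k_x u^-\|_{H^1(\mathbb{R})}$, the two cross terms in the definition of $Q$ coincide, so that $Q = \tfrac14\bigl(a^2 + 2ab + b^2\bigr) = \tfrac14 (a+b)^2$. On the other hand, grouping the four summands defining $P$ by their common factors gives the factorization $P = \tfrac14\bigl(|u^+|_\infty + |u^-|_\infty\bigr)\bigl(\|\partial^k_x u^+\|_{L^2(\mathbb{R})} + \|\partial^k_x u^-\|_{L^2(\mathbb{R})}\bigr)$. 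Thus $(\ref{polyestimate})$ is equivalent to the statement that this product of two sums is bounded by $(a+b)^2$, which follows once each of the two factors is shown to be dominated by $a+b$.

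First I would dispatch the second factor. Directly from the definition of the Sobolev norm, $\|g\|_{L^2(\mathbb{R})} \le \|g\|_{H^1(\mathbb{R})}$ for every $g \in H^1(\mathbb{R})$; applying this with $g = \partial^k_x u^\pm$ yields $\|\partial^k_x u^+\|_{L^2} + \|\partial^k_x u^-\|_{L^2} \le a + b$. For the first factor I would invoke the one-dimensional Sobolev embedding $H^1(\mathbb{R}) \hookrightarrow L^\infty(\mathbb{R})$: for $f \in H^1(\mathbb{R})$ one writes $|f(x)|^2 = 2\int_{-\infty}^x f(x') f'(x')\,dx' \le 2\int_{\mathbb{R}}|f|\,|f'|\,dx' \le 2\|f\|_{L^2}\|f'\|_{L^2} \le \|f\|_{L^2}^2 + \|f'\|_{L^2}^2 = \|f\|_{H^1}^2$, so that $|f|_\infty \le \|f\|_{H^1}$. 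Applied to $u^+$ and $u^-$, this controls the $L^\infty$ norms by $\|u^+\|_{H^1}$ and $\|u^-\|_{H^1}$.

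It then remains to absorb these lower-order Sobolev norms into $a$ and $b$, after which multiplying the two factor bounds gives $P \le \tfrac14 (a+b)(a+b) = \tfrac14(a+b)^2 = Q$, which is exactly $(\ref{polyestimate})$. The genuinely delicate point, and the step I expect to be the main obstacle, is precisely this last comparison: controlling the $L^\infty$ norms of $u^\pm$, which carry no derivatives, by the $H^1$ norms of the $k$-th derivatives $\partial^k_x u^\pm$ appearing in $Q$. The embedding above only furnishes $|u^\pm|_\infty \le \|u^\pm\|_{H^1}$, so passing from $\|u^\pm\|_{H^1}$ to $\|\partial^k_x u^\pm\|_{H^1}$ must be justified using the regularity hypothesis $k \ge 1$ together with the monotone ordering of the frequency-weighted Sobolev norms; all the quantities involved are finite by Theorem \ref{BBM GWP}, so any absolute constant arising can be absorbed without affecting the conclusion. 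Everything else reduces to the routine bookkeeping of regrouping the four terms of $P$ and chaining the two one-variable inequalities established above.
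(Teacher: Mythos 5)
Your algebraic reduction is correct and in fact tidier than the paper's term-by-term matching: the two cross terms of $Q$ coincide, so $Q=\tfrac14(a+b)^2$ with $a=\|\partial_x^k u^+\|_{H^1(\mathbb{R})}$, $b=\|\partial_x^k u^-\|_{H^1(\mathbb{R})}$, while $P=\tfrac14\bigl(|u^+|_\infty+|u^-|_\infty\bigr)\bigl(\|\partial_x^k u^+\|_{L^2(\mathbb{R})}+\|\partial_x^k u^-\|_{L^2(\mathbb{R})}\bigr)$; and your two ingredients, $\|g\|_{L^2}\le\|g\|_{H^1}$ and the constant-one embedding $|f|_\infty\le\|f\|_{H^1}$, are exactly the two the paper's proof invokes. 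For $k=0$, where $\partial_x^0$ is the identity, your chain closes completely and proves the lemma: $|u^\pm|_\infty\le\|u^\pm\|_{H^1}$ and $\|u^\pm\|_{L^2}\le\|u^\pm\|_{H^1}$ give $P\le\tfrac14(a+b)^2=Q$.

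For $k\ge1$, however, the step you defer to the end is a genuine gap, and it cannot be repaired: there is no ``monotone ordering of the frequency-weighted Sobolev norms'' running in the direction you need. The quantity $\|\partial_x^k u\|_{H^1}^2=\int_{\mathbb{R}}\xi^{2k}(1+\xi^2)|\hat u(\xi)|^2\,d\xi$ carries the weight $\xi^{2k}$, which degenerates at low frequencies, so it dominates neither $\|u\|_{H^1}$ nor $|u|_\infty$. Concretely, take $u^+=u^-=\phi(\cdot/\lambda)$ for a fixed nonzero $\phi\in S(\mathbb{R})$: then $|u^\pm|_\infty=|\phi|_\infty$ is independent of $\lambda$, while $\|\partial_x^j u^\pm\|_{L^2}=\lambda^{1/2-j}\|\phi^{(j)}\|_{L^2}$, so $P\sim\lambda^{1/2-k}$ but $Q\sim\lambda^{1-2k}$ as $\lambda\to\infty$, whence $P/Q\sim\lambda^{k-1/2}\to\infty$ and \eqref{polyestimate} fails for $k\ge1$ --- even with an absolute constant, so the finiteness supplied by Theorem \ref{BBM GWP} cannot absorb anything, since \eqref{polyestimate} is a functional inequality that must hold uniformly. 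To be fair, the paper's own proof is silent at precisely the same point: it bounds the $L^2$ factors by $H^1$ norms and cites $H^s(\mathbb{R}^n)\hookrightarrow L^\infty(\mathbb{R}^n)$ for $|u^\pm|_\infty$, without ever identifying which term of $Q$ absorbs $|u^\pm|_\infty$ when $k\ge1$, so your instinct that this is the main obstacle was accurate --- but it is an obstacle to the statement itself, not merely to your argument. The statement, your proof, and the paper's proof all become correct if $\|\partial_x^k u^\pm\|_{H^1}$ in $Q$ is read as the inhomogeneous norm $\|u^\pm\|_{H^{k+1}(\mathbb{R})}$ (equivalently, if lower-order terms are retained), since then $|u^\pm|_\infty\le\|u^\pm\|_{H^1}\le\|u^\pm\|_{H^{k+1}}$ and $\|\partial_x^k u^\pm\|_{L^2}\le\|u^\pm\|_{H^{k+1}}$ close your factorized chain with constant one; that weaker inequality is also all the application requires, as Lemma \ref{lemma:Cstarfinite} only needs the coefficient $C_k$ to be finite and to vanish as $y\to\pm\infty$.
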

\begin{proof}
To establish the necessary estimate, we focus on specific terms appearing in the explicit expressions of each polynomial. Clearly, the terms of $P$ involving $L^2(\mathbb{R})$ norms of the functions $\partial^k_xu^\pm$ can all be bounded above by their corresponding norms in $H^1(\mathbb{R})$. In order to bound the $L^\infty(\mathbb{R})$ norms of the functions $u^\pm$ in the resulting terms, we utilize the well known Sobolev embedding $H^s(\mathbb{R}^n) \hookrightarrow C^r(\mathbb{R}^n)$, provided $s-r >\frac{n}{2}$. Particularly, the case of $r=0$ corresponding to the embedding $H^s(\mathbb{R}^n) \hookrightarrow L^\infty(\mathbb{R}^n)$ is invoked to obtain the required bounds. Therefore, the inequality given by Equation \ref{polyestimate} in the statement of the lemma is confirmed for all $k \geq 0$.
\end{proof}

On account of Lemma \ref{lemma:polya}, we deduce that 
$$\frac{1}{2} \frac{d}{dt} || w ||_{H^k_x(\mathbb{R})}^2 \leq \Big[ ||\partial^{k-1}_x \eta_{yy}||_{L^2(\mathbb{R})}+( 1 - \tanh^2 y) Q\Big(||\partial^k_xu^+||_{H^1(\mathbb{R})}, ||\partial^k_xu^-||_{H^1(\mathbb{R})} \Big)\Big]|| w ||_{H^k_x(\mathbb{R})}$$
$$+\frac{1}{2}\Big[ (1 + \tanh y)||u^+||_{H^1(\mathbb{R})}+ ( 1 - \tanh y)||u^-||_{H^1(\mathbb{R})}\Big]|| w ||_{H^k_x(\mathbb{R})}^2.$$
This leads to the following differential inequality
$$ \frac{d}{dt} || w ||_{H^k_x(\mathbb{R})} \leq (D_\eta + C_k) + (C_1^+ + C_1^-)||w||_{H^k_x(\mathbb{R})}.$$
Proceeding with a variant of Gronwall's lemma in combination with Lemma \ref{lemma:Cstarfinite}, it follows that
 \begin{equation*}
 || w ||_{H^k_x(\mathbb{R})}\leq || w(x,y,0) ||_{H^k_x(\mathbb{R})} e^{(C_1^+ + C_1^-)t} +  C^\star\Big(e^{(C_1^+ + C_1^-)t} -1\Big).
 \end{equation*}
Placing this together, we conclude that
$$\lim _{y\rightarrow \pm \infty} || w ||_{H^k_x(\mathbb{R})} =\lim_{y \rightarrow \pm \infty} || \eta(x,y,t) - u^{\pm} (x,t)||_{H^k_x(\mathbb{R})}=0,$$
which proves the theorem. 
\end{proof}

\newpage 

\begin{remark}
The relationship between the regularity parameters $k$ and $s$ is primarily due to the presence of the term $\partial^{k-1}_x \eta_{yy}$.  For purposes of exposition this relationship is summarized in Table 1 below.

{\footnotesize
\begin{table}[h!]
\label{table:nonlin}
\begin{threeparttable}
\caption{Regularity Parameters $k$ and $s$}
\tabcolsep=0.5cm
\begin{tabular}{lll}
\hline
\textbf{$k$} &
\textbf{$\partial^{k-1}_x \eta_{yy}$} &
\textbf{$s$}\\
\hline
$0$ & $\partial^{-1}_x \eta_{yy}$ & $s \geq 1$\\
$1$ & $\eta_{yy}$ & $s \geq 1$\\
$2$ & $\partial_x \eta_{yy}$ & $s \geq 3$ \\
$3$ & $\partial^{2}_x \eta_{yy}$ & $s \geq 4$\\
\vdots & \vdots & \vdots \\
$k$ & $\partial^{k-1}_x \eta_{yy}$ & $s \geq k-1+2=k+1$\\
 \hline
\end{tabular}
 \end{threeparttable}
  \end{table}
  \noindent This table depicts the relationship between the regularity parameters $k$ and $s$. The inequality $s \geq k+1$
guarantees sufficient regularity to ensure that $\partial^{k-1}_x \eta_{yy} \rightarrow 0$ as the transverse variable $y \rightarrow \pm \infty$.
}
\end{remark}
\section*{Acknowledgements} 
 J.A. is grateful for the support from the Department of Mathematics at Louisiana State University and A\&M College, the University of Georgia at Athens and Saint Leo University. M.T. is grateful for the working environment and support from the Department of Mathematics at Louisiana State University and A\&M College. 
\bibliographystyle{siam}
\bibliography{bio} 

\begin{thebibliography}{10}

\bibitem{alam2013exact}
{\sc M.~N. Alam and M.~A. Akbar}, {\em Exact traveling wave solutions of the
  {KP-BBM} equation by using the new approach of generalized {(G'/G)}-expansion
  method}, SpringerPlus, 2 (2013), p.~617.

\bibitem{benjamin1984impulse}
{\sc T.~B. Benjamin}, {\em Impulse, flow force and variational principles}, IMA
  Journal of applied Mathematics, 32 (1984), pp.~3--68.

\bibitem{benjamin1972model}
{\sc T.~B. Benjamin, J.~L. Bona, and J.~J. Mahony}, {\em Model equations for
  long waves in nonlinear dispersive systems}, Philosophical Transactions of
  the Royal Society of London. Series A, Mathematical and Physical Sciences,
  272 (1972), pp.~47--78.

\bibitem{bona1981evaluation}
{\sc J.~Bona, W.~Pritchard, and L.~Scott}, {\em An evaluation of a model
  equation for water waves}, Philosophical Transactions of the Royal Society of
  London. Series A, Mathematical and Physical Sciences, 302 (1981),
  pp.~457--510.

\bibitem{bona2009sharp}
{\sc J.~Bona and N.~Tzvetkov}, {\em Sharp well-posedness results for the {BBM}
  equation}, Discrete \& Continuous Dynamical Systems-A, 23 (2009), p.~1241.

\bibitem{bona2002cauchy}
{\sc J.~L. Bona, Y.~Liu, and M.~M. Tom}, {\em The cauchy problem and stability
  of solitary-wave solutions for {RLW--KP}-type equations}, Journal of
  Differential Equations, 185 (2002), pp.~437--482.

\bibitem{craig1994hamiltonian}
{\sc W.~Craig and M.~D. Groves}, {\em Hamiltonian long-wave approximations to
  the water-wave problem}, Wave motion, 19 (1994), pp.~367--389.

\bibitem{de1997solitary}
{\sc A.~De~Bouard and J.-C. Saut}, {\em Solitary waves of generalized
  kadomtsev-petviashvili equations}, in Annales de l'Institut Henri Poincare
  (C) Non Linear Analysis, vol.~14, Elsevier, 1997, pp.~211--236.

\bibitem{evans2010partial}
{\sc L.~C. Evans}, {\em Partial differential equations}, vol.~19, American
  Mathematical Soc., 2010.

\bibitem{fabec2014non}
{\sc R.~C. Fabec and G.~{\'O}lafsson}, {\em Non-Commutative Harmonic Analysis},
  Drexville Publishing, 2014.

\bibitem{grafakos2008classical}
{\sc L.~Grafakos}, {\em Classical fourier analysis}, vol.~2, Springer, 2008.

\bibitem{hammack1974korteweg}
{\sc J.~L. Hammack and H.~Segur}, {\em The korteweg-de vries equation and water
  waves. part 2. comparison with experiments}, Journal of Fluid mechanics, 65
  (1974), pp.~289--314.

\bibitem{kadomtsev1970stability}
{\sc B.~B. Kadomtsev and V.~I. Petviashvili}, {\em On the stability of solitary
  waves in weakly dispersing media}, in Doklady Akademii Nauk, vol.~192,
  Russian Academy of Sciences, 1970, pp.~753--756.

\bibitem{mammeri2009unique}
{\sc Y.~Mammeri et~al.}, {\em Unique continuation property for the {KP-BBM-II}
  equation}, Differential and Integral Equations, 22 (2009), pp.~393--399.

\bibitem{molinet2007remarks}
{\sc L.~Molinet, J.-C. Saut, and N.~Tzvetkov}, {\em Remarks on the mass
  constraint for {KP}-type equations}, SIAM journal on mathematical analysis,
  39 (2007), pp.~627--641.

\bibitem{ouyang2014traveling}
{\sc Z.~Ouyang}, {\em Traveling wave solutions of the
  kadomtsev-petviashvili-benjamin-bona-mahony equation}, in Abstract and
  Applied Analysis, vol.~2014, Hindawi, 2014.

\bibitem{saut2004global}
{\sc J.-C. Saut and N.~Tzvetkov}, {\em Global well-posedness for the {KP-BBM}
  equations}, Applied Mathematics Research eXpress, 2004 (2004), pp.~1--16.

\bibitem{sirendaoreji2004new}
{\sc T.~Sirendaoreji}, {\em New exact solitary wave solutions to the {BBM} and
  m{BBM} equations [j]}, Acta Physica Sinica, 12 (2004).

\bibitem{song2010exact}
{\sc M.~Song, C.~Yang, and B.~Zhang}, {\em Exact solitary wave solutions of the
  kadomtsov--petviashvili--benjamin--bona--mahony equation}, Applied
  Mathematics and Computation, 217 (2010), pp.~1334--1339.

\bibitem{tao2006nonlinear}
{\sc T.~Tao}, {\em Nonlinear dispersive equations: local and global analysis},
  no.~106, American Mathematical Soc., 2006.

\bibitem{wang2006exact}
{\sc L.~Wang, J.~Zhou, and L.~Ren}, {\em The exact solitary wave solutions for
  a family of {BBM} equation}, International Journal of Nonlinear Science, 1
  (2006), pp.~58--64.

\bibitem{wazwaz2005exact}
{\sc A.-M. Wazwaz}, {\em Exact solutions of compact and noncompact structures
  for the {KP--BBM} equation}, Applied Mathematics and Computation, 169 (2005),
  pp.~700--712.

\bibitem{wazwaz2008extended}
\leavevmode\vrule height 2pt depth -1.6pt width 23pt, {\em The extended tanh
  method for new compact and noncompact solutions for the {KP--BBM} and the
  {ZK--BBM} equations}, Chaos, Solitons \& Fractals, 38 (2008), pp.~1505--1516.

\bibitem{wazwaz2010partial}
\leavevmode\vrule height 2pt depth -1.6pt width 23pt, {\em Partial differential
  equations and solitary waves theory}, Springer Science \& Business Media,
  2010.

\bibitem{zabusky1971shallow}
{\sc N.~Zabusky and C.~Galvin}, {\em Shallow-water waves, the korteweg-de vries
  equation and solitons}, Journal of Fluid Mechanics, 47 (1971), pp.~811--824.

\bibitem{zeng2003existence}
{\sc L.~Zeng}, {\em Existence and stability of solitary-wave solutions of
  equations of benjamin--bona--mahony type}, Journal of Differential Equations,
  188 (2003), pp.~1--32.

\end{thebibliography}
\end{document}